\documentclass[a4paper,11pt]{article}
\usepackage[T1]{fontenc}
\usepackage{amsfonts}
\usepackage{textcomp}
\usepackage{amsmath, amsthm, amssymb, mathrsfs}
\usepackage{graphicx, url}
\usepackage{xcolor}
\usepackage[utf8]{inputenc}
\usepackage[margin=1in]{geometry}
\usepackage{framed}
\usepackage[round]{natbib} 
\usepackage{algorithm}
\usepackage{algpseudocode}
\usepackage{hyperref}

\title{Exact Simulation from Tempered Stable Distributions with Infinite Variation ($\alpha\ge1$)}
\author{Michael Grabchak\footnote{Email address: mgrabcha@charlotte.edu}\\
{\it University of North Carolina Charlotte}}

\begin{document}

\newtheorem{claim}{Claim}
\newtheorem{prop}{Proposition}
\newtheorem{thrm}{Theorem}
\newtheorem{defn}{Definition}
\newtheorem{cor}{Corollary}
\newtheorem{lemma}{Lemma}
\newtheorem{remark}{Remark}
\newtheorem{exam}{Example}
\newtheorem{algo}{Algorithm}

\newcommand{\rd}{\mathrm d}
\newcommand{\rE}{\mathrm E}
\newcommand{\rP}{\mathrm P}
\newcommand{\Exp}{\mathrm{Exp}}
\newcommand{\iid}{\stackrel{\mathrm{iid}}{\sim}}
\newcommand{\eqd}{\stackrel{d}{=}}

\maketitle

\begin{abstract}
We develop the first exact and computationally tractable method for simulating from tempered stable distributions in the infinite variation case, which corresponds to $\alpha\in[1,2)$. A small simulation study shows that the approach works well.
\end{abstract}

\section{Introduction}

Tempered stable (TS) distributions are a class of models obtained by modifying the tails of stable distributions to make them lighter, which leads to models that are more realistic in practice. They have been used successfully in fields such as finance, physics, computer science, actuarial science, and biostatistics, see the references in \cite{Rachev:Kim:Bianchi:Fabozzi:2011} or \cite{Grabchak:2016}. TS distributions were introduced in \cite{Tweedie:1984}, rediscovered in \cite{Koponen:1995}, and then generalized and systematically studied in \cite{Rosinski:2007}. In the finite variation case (where $\alpha<1$), several exact simulation methods are known. These include the simple rejection sampling method of \cite{Brix:1999} and the double rejection method of \cite{Devroye:2009}. Other exact methods can be found in \cite{Hofert:2011}, \cite{Dassios:Qu:Zhao:2018}, \cite{Qu:Dassios:Zhao:2021}, and \cite{Grabchak:2021}. However, to date, no computationally tractable exact simulation method for the infinite variation case (where $\alpha\ge1$) has been proposed. There are several approximate methods, which include approaches based on truncating an infinite series representation \citep{Rosinski:2007} and on rejection sampling  \citep{Baeumer:Meerschaert:2010}. A comparative survey of these and other approximate methods, along with several new proposals, is provided by \cite{Kawai:Masuda:2011}.

In this paper, we develop the first computationally tractable exact method for simulating TS distributions in the infinite variation case ($\alpha\ge1$). This enables simulation of related stochastic processes, including the corresponding L\'evy and Ornstein-Uhlenbeck processes, see \cite{Grabchak:Sabino:2023}. This also allows for the use of simulation-based inference, which is important since the distribution and density functions of TS distributions are computationally difficult to evaluate. Our simulation methodology is based on rejection sampling and relies on the following observation. While the densities of TS distributions are difficult to evaluate, they can be viewed as marginal distributions of a joint distribution with a simple density. In the finite variation case, simulation methods based on such a representation are given in \cite{Devroye:2009} and \cite{Qu:Dassios:Zhao:2021}.

The rest of this paper is organized as follows. In Section \ref{sec: main}, we present our main results. In Section \ref{sec: sim alg}, we explain how to use these results for simulation and give our algorithms. In Section \ref{sec: sims}, we present the results of a small simulation study. In Section \ref{sec: ref}, we introduce a refinement of our methodology that can improve performance in some cases. In Section \ref{sec: ext}, we briefly discuss 
extensions of our methodology to the simulation of several related classes of distributions and give some directions for future work.

Before proceeding, we introduce our notation. We write $N(0,1)$ to denote the standard normal distribution, $\Exp(1)$ to denote the exponential distribution with mean $1$, and, for $a<b$, we write $U(a,b)$ to denote the uniform distribution on $(a,b)$. For $x\in\mathbb R$ we define
$$
\mathrm{sign}(x) =\begin{cases}
1 & \mbox{if}\ \ x>0\\
0 & \mbox{if}\ \ x=0\\
-1 & \mbox{if}\ \  x<0
\end{cases}.
$$

\section{Main Results}\label{sec: main}

Fix $\alpha\in(0,2)$ and let $f$ be the density of a fully left-skewed $\alpha$-stable distribution with scale parameter $\sigma>0$. Proposition 1.2.12 in \cite{Samorodnitsky:Taqqu:1994} tells us that for $z\ge0$
\begin{eqnarray}\label{eq: mgf stable}
\int_{-\infty}^\infty e^{zx} f(x)\rd x = \begin{cases}
e^{-\sigma^\alpha\frac{z^\alpha}{\cos\left(\frac{\pi\alpha}{2}\right)} } & \mbox{if } \alpha\ne1\\
e^{\sigma \frac{2}{\pi} z\log z } & \mbox{if } \alpha=1
\end{cases}.
\end{eqnarray}
We define a fully left-skewed TS distribution as one with density
\begin{eqnarray}\label{eq: g defin}
g(x) = C e^{x/\ell} f(x),
\end{eqnarray}
where $\ell>0$ is a parameter and the normalizing constant is given by
$$
C = \begin{cases}
e^{\frac{\sigma^\alpha}{\ell^{\alpha}\cos\left(\frac{\pi\alpha}{2}\right)} } & \mbox{if } \alpha\ne1\\
 e^{\sigma\frac{2}{\ell\pi}\log\ell}  & \mbox{if } \alpha=1
\end{cases}.
$$
In this case, for $z\ge -1/\ell$, the moment generating function (mgf) is given by
\begin{eqnarray}\label{eq: mgf TS}
\int_{-\infty}^\infty e^{zx} g(x)\rd x = \begin{cases}
e^{\frac{\sigma^\alpha}{\cos\left(\frac{\pi\alpha}{2}\right)}\left(\ell^{-\alpha} - (z+1/\ell)^\alpha\right) } & \mbox{if } \alpha\ne1\\
e^{\sigma \frac{2}{\pi} \left( \ell^{-1}\log(\ell) +(z+1/\ell)\log(z+1/\ell) \right) } & \mbox{if } \alpha=1
\end{cases}.
\end{eqnarray}
We denote this distribution by $\mathrm{TS}_-(\alpha,\ell,\sigma)$, where the minus sign emphasizes that this is fully left-skewed. 

When $\alpha\in(0,1)$, the support of $f$ is $(-\infty,0]$, and hence $g(x)\le C f(x)$ in this case. This leads to a simple rejection sampling algorithm, where the proposal distribution is the underlying stable. This was first published in \cite{Brix:1999}, see also Algorithm 0 in \cite{Kawai:Masuda:2011}. On the other hand, for $\alpha\in[1,2)$, the support of $f$ is the entire real line. Consequently, no such bound and, hence, no such algorithm exists. As we will see, in this case, the proposal distribution will be, essentially, a mixture of the underlying stable distribution and a distribution that is, itself, a mixture of half normal distributions.

\begin{prop}\label{prop: get sigma}
1. Let $X\sim \mathrm{TS}_-(\alpha,\ell/\sigma,1)$. If $\alpha\ne1$, then  $\sigma X\sim \mathrm{TS}_-(\alpha,\ell,\sigma)$. If $\alpha=1$, then  $\sigma X-(2/\pi)\sigma\log\sigma \sim \mathrm{TS}_-(\alpha,\ell,\sigma)$.\\
2. If $X_1\sim \mathrm{TS}_-(\alpha,\ell,\sigma_1)$ and $X_2\sim \mathrm{TS}_-(\alpha,\ell,\sigma_2)$ are independent random variables, then $X_1+X_2\sim\mathrm{TS}_-\left(\alpha,\ell,(\sigma_1^\alpha+\sigma_1^\alpha)^{1/\alpha}\right)$.
\end{prop}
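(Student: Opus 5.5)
The plan is to argue entirely through the moment generating function in \eqref{eq: mgf TS}. For $\mathrm{TS}_-(\alpha,\ell,\sigma)$ this mgf is finite for all $z\ge -1/\ell$, so in particular on an open interval containing $0$. A distribution whose mgf is finite on a neighborhood of the origin is uniquely determined by it. It therefore suffices to show, in each part, that the relevant random variable has the mgf \eqref{eq: mgf TS} with the claimed parameters for all $z\ge-1/\ell$.

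For Part 1, take $X\sim\mathrm{TS}_-(\alpha,\ell/\sigma,1)$. Then $\rE[e^{z\sigma X}]$ is the mgf of $X$ evaluated at $\sigma z$. This is valid because $z\ge -1/\ell$ if and only if $\sigma z\ge -1/(\ell/\sigma)$.

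\emph{Case $\alpha\ne1$.} The exponent becomes
$$\frac{1}{\cos(\pi\alpha/2)}\Bigl((\ell/\sigma)^{-\alpha}-(\sigma z+\sigma/\ell)^\alpha\Bigr)=\frac{\sigma^\alpha}{\cos(\pi\alpha/2)}\Bigl(\ell^{-\alpha}-(z+1/\ell)^\alpha\Bigr),$$
which is exactly the mgf of $\mathrm{TS}_-(\alpha,\ell,\sigma)$.

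\emph{Case $\alpha=1$.} The exponent is
$$\frac{2}{\pi}\Bigl(\tfrac{\sigma}{\ell}\log\tfrac{\ell}{\sigma}+\sigma(z+1/\ell)\log\bigl(\sigma(z+1/\ell)\bigr)\Bigr).$$
Expanding the logarithms, the two terms $\mp\frac{\sigma}{\ell}\log\sigma$ cancel. What remains is the target exponent $\sigma\frac{2}{\pi}\bigl(\ell^{-1}\log\ell+(z+1/\ell)\log(z+1/\ell)\bigr)$ plus the extra linear term $\frac{2}{\pi}\sigma z\log\sigma$. A term linear in $z$ in the exponent is exactly the effect of a deterministic shift. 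Multiplying by $e^{-z(2/\pi)\sigma\log\sigma}$, i.e.\ passing to $\sigma X-(2/\pi)\sigma\log\sigma$, recovers the $\mathrm{TS}_-(1,\ell,\sigma)$ mgf.

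For Part 2, independence gives that the mgf of $X_1+X_2$ is the product of the two mgfs on their common domain $z\ge-1/\ell$, so the exponents add.
\begin{itemize}
\item For $\alpha\ne1$, the sum of the exponents is $\frac{\sigma_1^\alpha+\sigma_2^\alpha}{\cos(\pi\alpha/2)}\bigl(\ell^{-\alpha}-(z+1/\ell)^\alpha\bigr)$. This is the exponent for scale $(\sigma_1^\alpha+\sigma_2^\alpha)^{1/\alpha}$. (The statement's ``$\sigma_1^\alpha+\sigma_1^\alpha$'' should read $\sigma_1^\alpha+\sigma_2^\alpha$.)
\item For $\alpha=1$, the exponent is linear in $\sigma$, so adding gives scale $\sigma_1+\sigma_2$, which is the same formula.
\end{itemize}

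The only step needing care is the $\alpha=1$ computation in Part 1. There one must track the logarithmic terms to see that they produce precisely the stated shift, with the sign consistent with the convention in \eqref{eq: mgf stable}. The rest is routine algebra together with the uniqueness of distributions having an mgf finite near $0$.
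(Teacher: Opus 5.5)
Your proof is correct and is the argument the paper has in mind: the paper omits the proof, saying only that the result follows directly from \eqref{eq: mgf TS}. Your computations fill this in, namely the rescaled mgf (including the cancellation of the $\pm\frac{\sigma}{\ell}\log\sigma$ terms that leaves the linear shift when $\alpha=1$), the product of mgfs for Part 2, and uniqueness for mgfs finite near $0$, and you are also right that $\sigma_1^\alpha+\sigma_1^\alpha$ is a typo for $\sigma_1^\alpha+\sigma_2^\alpha$.
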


The proposition follows directly from \eqref{eq: mgf TS} and thus the proof is omitted. In light of this result, we generally focus only on the case $\sigma=1$.

\begin{remark}\label{remark: right skewed}
In the above, we are working with fully {\em left}-skewed TS distributions. However, it is more common to work with fully {\em right}-skewed TS distributions. Our choice is motivated by the fact that, for our purposes, it is convenient to place the lighter tail on the positive half-line. Ultimately, this does not matter as a random variable $X$ has a fully {\em right}-skewed TS distribution if and only if $-X$ has a fully {\em left}-skewed TS distribution.
\end{remark}

\begin{remark}
There are many parameterizations of stable distributions. In the one that \cite{Nolan:2020} calls Parametrization $1$, $f$ is the density of an $\alpha$-stable distributions with parameters $\beta=-1$, $\gamma=\sigma$, and $\delta=0$. Note that the transform in \eqref{eq: mgf stable} looks like a mgf, but we do not call it that since it does not converge on an open interval containing $0$. The transform does, however, uniquely determine the distribution. On the other hand, the transform in \eqref{eq: mgf TS} is a mgf.
\end{remark}

\subsection{Results for $\alpha\in(1,2)$}

In this subsection, we focus on the case $\alpha\in(1,2)$ and $\sigma=1$. First note that, in this case,
$$
C = e^{\frac{1}{\ell^{\alpha}\cos\left(\frac{\pi\alpha}{2}\right)} } = e^{\frac{-1}{\ell^{\alpha}\left|\cos\left(\frac{\pi\alpha}{2}\right)\right| } } <1.
$$
An explicit representation of $f$ can be found, in, e.g., Theorem 3.3 of \cite{Nolan:2020}, and is given by
\begin{eqnarray*}
f(x) =   \frac{1}{\pi}\frac{\alpha}{\alpha-1} |x|^{1/(\alpha-1)} *\begin{cases}
\int_{-\theta_0}^{\pi/2} V(\theta) e^{-x^{\alpha/(\alpha-1)}V(\theta) }\rd \theta ,& \mbox{if } x>0\\
\int_{-\pi/2} ^{-\theta_0} V(\theta) e^{-|x|^{\alpha/(\alpha-1)}V(\theta) }\rd \theta, & \mbox{if } x<0
 \end{cases},
\end{eqnarray*}
where
$$
\theta_0 = \frac{\pi}{\alpha}-\frac{\pi}{2}>0
$$
and for $\theta\in[-\pi/2,\pi/2]$
$$
V(\theta) = \left(\cos(\alpha\theta_0)\right)^{1/(\alpha-1)} \left(\cos(\theta)\right)^{1/(\alpha-1)} \cos((\alpha-1)\theta+\alpha\theta_0)\left|\sin(\alpha(\theta+\theta_0))\right|^{-\alpha/(\alpha-1)}.
$$
Since $1<\alpha<2$, it follows that $0<\theta_0<\pi/2$.

It is easily seen that $f$ is a marginal density of the joint density
\begin{eqnarray*}
f^*(x,\theta) =  \frac{1}{\pi}\frac{\alpha}{\alpha-1} |x|^{1/(\alpha-1)}  *
 \begin{cases}
V(\theta) e^{-x^{\alpha/(\alpha-1)}V(\theta) }, &  x>0, -\theta_0<\theta<\pi/2\\
V(\theta) e^{-|x|^{\alpha/(\alpha-1)}V(\theta) } ,& x<0, -\pi/2<\theta< -\theta_0
 \end{cases}
\end{eqnarray*}
and that $g$ is a marginal density of the joint density
\begin{eqnarray}\label{eq: temp stable joint pdf}
g^*(x,\theta) = C e^{x/\ell} f^*(x,\theta).
\end{eqnarray}
We now state our main result.
  
\begin{thrm}\label{thrm: main}
Fix $\epsilon\in(0,1)$ and set
$$
m_\epsilon^* = (\ell\epsilon)^{-(\alpha-1)} \frac{\alpha}{|\cos(\pi\alpha/2)|} \ \  \mbox{and} \ \  m_\epsilon^\sharp = \max\{m_\epsilon^*,1\}.
$$
1. For $x\le m^\sharp_\epsilon$ and $- \frac{\pi}{2}<\theta\le \frac{\pi}{2}$, we have
$$
g^*(x,\theta) \le C_{1} f^*(x,\theta),
$$
where
$$
C_{1}  = C e^{m_\epsilon^\sharp/\ell}.
$$
2.  For $x\ge m_\epsilon^\sharp$ and $-\theta_0<\theta\le \frac{\pi}{2}$, we have
$$
g^*(x,\theta) \le C_{2} \frac{\alpha}{\pi} e^{-\xi_\epsilon(\theta)(x-m_\epsilon(\theta))^2/2}\sqrt{\frac{2\xi_\epsilon(\theta)}{\pi}} ,
$$
where
$$
m_\epsilon(\theta) = \left(\frac{\alpha-1}{\ell \epsilon V(\theta)\alpha}\right)^{\alpha-1}, \ \ \ 
\xi_\epsilon(\theta) = (V(\theta))^{\alpha-1} \frac{(\epsilon\alpha)^{\alpha-1}}{(\alpha-1)^\alpha}\ell^{-(2-\alpha)},
$$
and
$$
C_{2} = C^{1-\epsilon^{1-\alpha}}  \sqrt \pi \frac{(\epsilon\alpha)^{(1-\alpha)/2}}{(\alpha-1)^{1-\alpha/2}} \left(\frac{3-\alpha}{e(1-\epsilon)}\right)^{\frac{3-\alpha}{2}} \ell^{1-\alpha/2} 2^{-(4-\alpha)/2}.
$$
\end{thrm}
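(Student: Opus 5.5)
Part 1 is immediate. On the support of $f^*$ we have $g^*(x,\theta)=Ce^{x/\ell}f^*(x,\theta)$, and for $x\le m_\epsilon^\sharp$ we have $e^{x/\ell}\le e^{m^\sharp_\epsilon/\ell}$. Where $f^*$ vanishes both sides are zero. So the whole work is in Part 2.

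For Part 2 we have $x\ge m_\epsilon^\sharp\ge 1>0$ and $\theta\in(-\theta_0,\pi/2)$. Write $a=\alpha/(\alpha-1)>2$, so that
$$g^*(x,\theta)=C\frac{\alpha}{\pi(\alpha-1)}x^{1/(\alpha-1)}V(\theta)\exp\{x/\ell-x^aV(\theta)\}.$$
I would split the exponent as $-h_\theta(x)-(1-\epsilon)V(\theta)x^a$, where $h_\theta(x)=\epsilon V(\theta)x^a-x/\ell$.

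\textbf{The quadratic part.} The function $h_\theta$ is convex on $(0,\infty)$. Setting $h_\theta'(x)=0$ gives $\epsilon V a x^{a-1}=1/\ell$, i.e.\ $x=m_\epsilon(\theta)$, which explains the definition of $m_\epsilon(\theta)$. Substituting $x=m_\epsilon(\theta)t$ and using $\epsilon V m_\epsilon^{a-1}=1/(\ell a)$ gives
$$h_\theta(x)-h_\theta(m_\epsilon)=\frac{m_\epsilon}{\ell}\Big[\frac{t^a-1}{a}-(t-1)\Big],\qquad h_\theta''(m_\epsilon)=\frac{a-1}{\ell m_\epsilon}=\xi_\epsilon(\theta).$$
This is exactly the $\xi_\epsilon(\theta)$ in the statement. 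The aim is therefore
$$h_\theta(x)\ge h_\theta(m_\epsilon(\theta))+\tfrac12\xi_\epsilon(\theta)(x-m_\epsilon(\theta))^2.$$
\begin{itemize}
\item For $x\ge m_\epsilon(\theta)$ this follows from Taylor's theorem, because $h_\theta''(x)\propto x^{a-2}$ is increasing when $a>2$.
\item For $m^\sharp_\epsilon\le x<m_\epsilon(\theta)$ it can fail, since $h''$ is smaller there. I would use the constraint $x\ge m^\sharp_\epsilon\ge m_\epsilon^*$: it forces $t=x/m_\epsilon(\theta)$ to be bounded away from $0$ relative to $V$. I would absorb the deficit using part of the $(1-\epsilon)Vx^a$ term, together with the constant $C^{-\epsilon^{1-\alpha}}=e^{\epsilon^{1-\alpha}/(\ell^\alpha|\cos(\pi\alpha/2)|)}$. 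That constant is $>1$ and looks designed to dominate $-h_\theta(m_\epsilon)=m_\epsilon/(\ell\alpha)$ and the remaining boundary terms.
\end{itemize}
I expect this region to be the main obstacle. The first thing I would try is to bound $\phi(t)=(t^a-1)/a-(t-1)-\tfrac{a-1}{2}(t-1)^2$ from below on $[t_0,1]$, and check that the resulting error times $m_\epsilon/\ell$ is controlled by the definition of $m^*_\epsilon$ and $|\cos(\pi\alpha/2)|$.

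\textbf{The prefactor.} Next I would handle $x^{1/(\alpha-1)}V(\theta)e^{-(1-\epsilon)Vx^a}$ together with the term $e^{-h_\theta(m_\epsilon)}$. The goal is to write the result as a constant times $\sqrt{2\xi_\epsilon(\theta)/\pi}$, with all $\theta$-dependence disappearing except through $\xi_\epsilon$. Since $\xi_\epsilon\propto V^{\alpha-1}$, I would write $V=c\,\xi_\epsilon^{1/(\alpha-1)}$ and split $x^{1/(\alpha-1)}V$ into a power of $Vx^a$ times $\sqrt{\xi_\epsilon}$ times an explicit $\ell$, $\epsilon$, $\alpha$-dependent constant. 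Then I would apply $\sup_{s>0}s^pe^{-(1-\epsilon)s}=(p/(e(1-\epsilon)))^p$ with $s=Vx^a$. This should produce the factor $\left(\frac{3-\alpha}{e(1-\epsilon)}\right)^{(3-\alpha)/2}$, suggesting $p=(3-\alpha)/2$ after the rebalancing, possibly with $x\ge1$ used to raise a power of $x$.

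\textbf{Assembly.} Finally I would collect the constants $(\epsilon\alpha)^{(1-\alpha)/2}$, $(\alpha-1)^{\alpha/2-1}$, $\ell^{1-\alpha/2}$, $2^{-(4-\alpha)/2}$ and $\sqrt\pi$, and verify that they match $C_2$. Here $C\cdot C^{-\epsilon^{1-\alpha}}$ comes from the normalizing constant and the bound on $e^{-h_\theta(m_\epsilon)}$, and $\frac{\alpha}{\pi}\sqrt{2\xi_\epsilon/\pi}$ is split off to leave a half-normal density.
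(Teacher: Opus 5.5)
Your architecture matches the paper's. You split off $\epsilon V(\theta)x^{\alpha/(\alpha-1)}$ and bound $x/\ell-\epsilon V(\theta)x^{\alpha/(\alpha-1)}$ by a quadratic about its maximizer $m_\epsilon(\theta)$ with curvature $\xi_\epsilon(\theta)$; your ``$h_\theta''$ increasing'' is the paper's $L'''<0$. You handle the prefactor by writing $x^{1/(\alpha-1)}V=x^{\alpha/2-1}(Vx^{\alpha/(\alpha-1)})^{(3-\alpha)/2}V^{(\alpha-1)/2}$, applying the supremum bound with exponent $(3-\alpha)/2$, and dropping $x^{\alpha/2-1}\le 1$ since $x\ge 1$.

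The gap is at the point you call the main obstacle, and it comes from a missing observation: the region $m^\sharp_\epsilon\le x<m_\epsilon(\theta)$ is empty. By Lemma~\ref{lemma: V prop}, $V$ is decreasing on $(-\theta_0,\pi/2]$, so $V(\theta)\ge V(\pi/2)$. Since $m_\epsilon(\theta)\propto V(\theta)^{1-\alpha}$, this gives $m_\epsilon(\theta)\le m_\epsilon(\pi/2)$. Substituting $V(\pi/2)^{\alpha-1}=(\alpha-1)^{\alpha-1}\alpha^{-\alpha}|\cos(\pi\alpha/2)|$ shows $m_\epsilon(\pi/2)=m^*_\epsilon$, which is exactly why $m^*_\epsilon$ is defined as it is. Consequently $x\ge m^\sharp_\epsilon$ forces $t=x/m_\epsilon(\theta)\ge (V(\theta)/V(\pi/2))^{\alpha-1}\ge 1$, and your Taylor bullet for $x\ge m_\epsilon(\theta)$ already covers every case. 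As written, though, your proof leaves this case open with only a tentative plan, so it is incomplete.

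Your assembly step is missing the same inequality. You say $C^{-\epsilon^{1-\alpha}}$ ``looks designed to dominate'' $-h_\theta(m_\epsilon(\theta))=m_\epsilon(\theta)/(\ell\alpha)$, but proving this needs precisely $m_\epsilon(\theta)\le m^*_\epsilon$. With it, $m_\epsilon(\theta)/(\ell\alpha)\le m^*_\epsilon/(\ell\alpha)=\epsilon^{1-\alpha}\ell^{-\alpha}/|\cos(\pi\alpha/2)|=\log C^{-\epsilon^{1-\alpha}}$, with equality at $\theta=\pi/2$.

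This also shows your proposed remedy for the supposed bad region would not work as stated. The bound $C^{-\epsilon^{1-\alpha}}$ is attained. The factor $((3-\alpha)/(2e(1-\epsilon)))^{(3-\alpha)/2}$ is the sharp supremum of $s^{(3-\alpha)/2}e^{-(1-\epsilon)s}$. So neither factor has spare room to absorb an extra deficit, nor to cover ``remaining boundary terms'', without enlarging $C_2$.

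The fix is to insert $V(\theta)\ge V(\pi/2)\Rightarrow m_\epsilon(\theta)\le m^*_\epsilon$ (Lemma~\ref{lemma: V prop}, part 2) and delete your second bullet. Your argument then becomes the paper's proof.
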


Before giving the proof, we introduce two lemmas.

\begin{lemma}\label{lemma: V prop}
1. $V$ is nonnegative, strictly increasing on $(-\pi/2,-\theta_0)$, strictly decreasing on $(-\theta_0, \pi/2)$, and takes the values: $V(-\pi/2)=0$, $V(-\theta_0)=\infty$, and
$$
V(\pi/2) = (\alpha-1)\left(\alpha^{-\alpha}|\cos(\pi\alpha/2)|\right)^{1/(\alpha-1)}.
$$
2. For $-\theta_0<\theta\le \frac{\pi}{2}$, we have $m_\epsilon(\theta)\le m^*_\epsilon\le m^\sharp_\epsilon$ and $V(\theta)\ge V(\pi/2)$.
\end{lemma}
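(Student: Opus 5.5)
The plan is to treat the two parts separately. Part 1 is a direct computation with $V$; Part 2 then follows from Part 1 together with the monotonicity of $m_\epsilon$ in $V$.

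\emph{Part 1.} Write $V(\theta)=A\,h(\theta)$ with
$$
A=(\cos(\alpha\theta_0))^{1/(\alpha-1)},\qquad h(\theta)=(\cos\theta)^{1/(\alpha-1)}\cos((\alpha-1)\theta+\alpha\theta_0)\,|\sin(\alpha(\theta+\theta_0))|^{-\alpha/(\alpha-1)}.
$$
Since $\alpha\theta_0=\pi-\pi\alpha/2$, we have $\cos(\alpha\theta_0)=-\cos(\pi\alpha/2)=|\cos(\pi\alpha/2)|>0$, so $A>0$.

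\emph{Nonnegativity.} For $\theta\in(-\pi/2,\pi/2)$ we have $\cos\theta>0$. The argument $(\alpha-1)\theta+\alpha\theta_0$ ranges over
$$
\bigl(-(\alpha-1)\pi/2+\pi-\pi\alpha/2,\;(\alpha-1)\pi/2+\pi-\pi\alpha/2\bigr)=\bigl(3\pi/2-\alpha\pi,\;\pi/2\bigr),
$$
which lies inside $(-\pi/2,\pi/2)$ because $\alpha<2$. Hence the cosine factor is positive and $V\ge 0$.

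\emph{Boundary values.}
\begin{itemize}
\item At $\theta=-\pi/2$, the factor $(\cos\theta)^{1/(\alpha-1)}$ vanishes. The sine factor there is $|\sin(\alpha(\theta_0-\pi/2))|=|\sin(\pi-\alpha\pi)|\neq 0$, so $V(-\pi/2)=0$.
\item At $\theta=-\theta_0$, the sine factor vanishes while the other factors stay positive, so $V=\infty$.
\item At $\theta=\pi/2$, both $\cos\theta$ and $\cos((\alpha-1)\pi/2+\alpha\theta_0)=\cos(\pi/2)$ vanish. I would take the limit: $\cos\theta\sim(\pi/2-\theta)$ and $\cos((\alpha-1)\theta+\alpha\theta_0)\sim(\alpha-1)(\pi/2-\theta)$.
\end{itemize}
At $\pi/2$ this gives $V\sim A(\alpha-1)(\pi/2-\theta)^{1/(\alpha-1)+1}\,|\sin(\pi)|^{-\alpha/(\alpha-1)}$. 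This is degenerate because $\alpha(\pi/2+\theta_0)=\pi$, so the sine also vanishes there. I therefore also expand $\sin(\alpha(\theta+\theta_0))\sim\alpha(\pi/2-\theta)$. The powers of $(\pi/2-\theta)$ then have total exponent $1/(\alpha-1)+1-\alpha/(\alpha-1)=0$, and the limit is
$$
V(\pi/2)=|\cos(\pi\alpha/2)|^{1/(\alpha-1)}(\alpha-1)\,\alpha^{-\alpha/(\alpha-1)},
$$
which is the stated value.

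\emph{Monotonicity.} This is the main obstacle. I would differentiate $\log h$:
$$
\frac{d}{d\theta}\log h=-\frac{\tan\theta}{\alpha-1}-(\alpha-1)\tan((\alpha-1)\theta+\alpha\theta_0)-\frac{\alpha^2}{\alpha-1}\cot(\alpha(\theta+\theta_0)).
$$
I need to show this is $<0$ on $(-\theta_0,\pi/2)$ and $>0$ on $(-\pi/2,-\theta_0)$. The cleanest route I expect is the classical Zolotarev-type argument: write $V=\bigl(\sin(\alpha\phi)/\cos\phi\bigr)^{-\alpha/(\alpha-1)}\cos(\phi-\alpha\phi)/\cos\phi$ with $\phi=\theta+\theta_0$, and use that $u\mapsto \sin(\alpha u)/\cos u$ and related ratios are monotone on the relevant intervals.

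If that fails, I would pair terms. On $(-\theta_0,\pi/2)$, $\sin(\alpha\phi)>0$ and the $\cot$ term dominates, since $\alpha^2/(\alpha-1)$ is large and $\cot$ blows up at $\phi=0$. On the other interval the sign of $\sin(\alpha\phi)$ flips. I would then verify the sign using the identity $\cot a\ge$ linear combinations of $\tan$'s, via $\sin(a-b)$ formulas. Monotonicity is the well-known result from Nolan's proof of Theorem 3.3, and I would cite or adapt it.

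\emph{Part 2.} The bound $V(\theta)\ge V(\pi/2)$ on $(-\theta_0,\pi/2]$ follows from strict decrease together with continuity at $\pi/2$. Since $m_\epsilon(\theta)$ is decreasing in $V(\theta)$,
$$
m_\epsilon(\theta)\le\left(\frac{\alpha-1}{\ell\epsilon\alpha V(\pi/2)}\right)^{\alpha-1}.
$$
Substituting $V(\pi/2)$ gives
$$
\left(\frac{\alpha-1}{\alpha V(\pi/2)}\right)^{\alpha-1}=\alpha^{1-\alpha}\cdot\frac{\alpha^{\alpha}}{|\cos(\pi\alpha/2)|}=\frac{\alpha}{|\cos(\pi\alpha/2)|},
$$
so the right-hand side equals $m_\epsilon^*$. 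Finally, $m^*_\epsilon\le m^\sharp_\epsilon$ holds by definition of the maximum.
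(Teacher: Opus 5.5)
Your proposal is correct and follows the paper's argument. The paper obtains all of Part~1 by citing Lemma~3.8 of Nolan (2020), so your direct checks of nonnegativity and of the three boundary values, which are correct, are a bonus. It then derives Part~2 exactly as you do, from the monotonicity of $V$ together with the identity $m^*_\epsilon=m_\epsilon(\pi/2)$.

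Your own monotonicity sketches are not complete, and the $\phi=\theta+\theta_0$ form you wrote is not an identity for $V$: it drops $\theta_0$ from the cosine factors. The clean Zolotarev form comes from $u=\pi/2-\theta$, which gives $V=(\cos(\alpha\theta_0))^{1/(\alpha-1)}\bigl(|\sin(\alpha u)|/\sin u\bigr)^{-\alpha/(\alpha-1)}\sin((\alpha-1)u)/\sin u$ for $u\in(0,\pi)$. So, just as in the paper, it is the citation that carries that step.
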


\begin{proof}
The first part is given in Lemma 3.8 of \cite{Nolan:2020}, although we use different notation. The second follows from the first and the easily checked fact that $m_\epsilon^* = m_\epsilon(\pi/2)$.
\end{proof}

\begin{lemma}\label{lemma: Taylor}
1. Let $p>2$, $a,b>0$, and 
$
m= \left(\frac{a}{bp}\right)^{1/(p-1)}.
$
If
$$
L(x) = ax- b x^{p}, \ \ x\ge0,
$$
then for $x\ge m$
$$
L(x) \le  L(m) - \frac{|L''(m)|}{2}(x-m)^2 
$$
where
$$
|L''(m)| =  bp(p-1)m^{p-2}.
$$
2. For any $t\ge0$ and $a,b>0$
\begin{eqnarray}\label{eq: bound with exp}
t^{a} e^{-bt}\le (a/b)^{a} e^{-a}.
\end{eqnarray}
\end{lemma}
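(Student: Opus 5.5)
Part 1 is a concavity argument and Part 2 is a one-variable calculus bound, so I will treat them separately.

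For Part 1, set $m=\left(\frac{a}{bp}\right)^{1/(p-1)}$, which is the unique critical point of $L$ on $(0,\infty)$, since $L'(x)=a-bpx^{p-1}$ vanishes only there. Then $L''(x)=-bp(p-1)x^{p-2}$. Because $p>2$, the map $x\mapsto x^{p-2}$ is nondecreasing, so for every $y\ge m$ we have $L''(y)\le -bp(p-1)m^{p-2}=-|L''(m)|$.

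I will then use Taylor's theorem with integral remainder around $m$, using $L'(m)=0$:
\[
L(x)=L(m)+\int_m^x (x-y)L''(y)\,\rd y .
\]
For $x\ge m$ the weight $x-y$ is nonnegative on the range of integration. Combined with the bound $L''(y)\le -|L''(m)|$, this gives
\[
L(x)\le L(m)-|L''(m)|\int_m^x (x-y)\,\rd y = L(m)-\frac{|L''(m)|}{2}(x-m)^2 .
\]
The formula $|L''(m)|=bp(p-1)m^{p-2}$ is immediate from the expression for $L''$.

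The only delicate point is the direction of monotonicity of $L''$, which is exactly where $p>2$ is used. On $x\ge m$ the curvature only becomes more negative, so the quadratic obtained from the curvature at $m$ dominates $L$. One could alternatively argue that $h(x)=L(x)-L(m)+\frac{|L''(m)|}{2}(x-m)^2$ satisfies $h(m)=h'(m)=0$ and $h''\le0$ on $[m,\infty)$, hence $h\le 0$ there. I would present whichever of the two arguments is shorter.

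For Part 2, consider $\phi(t)=a\log t-bt$ for $t>0$. It is strictly concave, since $\phi''(t)=-a/t^2<0$. Its derivative $a/t-b$ vanishes at $t=a/b$, which is therefore the global maximizer. Evaluating there gives
\[
\phi(t)\le a\log(a/b)-a .
\]
Exponentiating yields $t^ae^{-bt}\le (a/b)^a e^{-a}$ for all $t>0$. At $t=0$ the left side is $0$ (as $a>0$), so the inequality holds trivially, which completes \eqref{eq: bound with exp}.

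I expect no real obstacle in either part; the only care needed is handling the endpoint $t=0$ in Part 2 and justifying the sign of $x-y$ in the Taylor remainder in Part 1.
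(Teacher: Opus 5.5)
Your proof is correct and takes essentially the paper's approach. For Part 1 the paper also Taylor-expands about the critical point $m$ and uses that $L''$ only becomes more negative to the right of $m$; it phrases this as a third-order Lagrange remainder with $L'''(x^*)<0$, whereas you use a second-order integral remainder with $L''(y)\le L''(m)$. Part 2 is the same elementary calculus bound that the paper leaves to the reader.
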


\begin{proof}
The second part can be checked using basic calculus. For the first part, we have
$$
L'(x) = a-bpx^{p-1}, \ \  L''(x) = -bp(p-1)x^{p-2} <0,
$$
and $L'''(x) <0$. Noting that $L'(m)=0$  and applying Taylor's theorem gives
$$
L(x) = L(m) - \frac{|L''(m)|}{2}(x-m)^2 -  \frac{|L'''(x^*)|}{6}(x-m)^3 \le L(m) - \frac{|L''(m)|}{2}(x-m)^2,
$$
where $x^*$ is some point that lies between $x$ and $m$.
\end{proof}

\begin{proof}[Proof of Theorem \ref{thrm: main}]
The first part is immediate. Henceforth, assume $x\ge m^\sharp_\epsilon$, $-\theta_0<\theta\le \frac{\pi}{2}$, and note that, by Lemma \ref{lemma: V prop}, $x\ge m_\epsilon(\theta)$. We have
\begin{eqnarray*}
&& e^{x/\ell} x^{\frac{1}{\alpha-1}} V(\theta) e^{-x^{\alpha/(\alpha-1)}V(\theta) } \\
&& \qquad=x^{\frac{1}{\alpha-1}}\left(V(\theta)\right)^{(3-\alpha)/2} e^{-(1-\epsilon)x^{\alpha/(\alpha-1)}V(\theta) } e^{x/\ell -\epsilon x^{\alpha/(\alpha-1)}V(\theta) }  \sqrt{\frac{\pi (\alpha-1)^\alpha \ell^{2-\alpha}} {2(\epsilon\alpha)^{\alpha-1} }} \sqrt{\frac{2\xi_\epsilon(\theta)}{\pi}}.
\end{eqnarray*}
Applying \eqref{eq: bound with exp} with $a=(3-\alpha)/2$ and $b=1-\epsilon$ gives
\begin{eqnarray*}
x^{\frac{1}{\alpha-1}}\left(V(\theta)\right)^{(3-\alpha)/2} e^{-(1-\epsilon)x^{\alpha/(\alpha-1)}V(\theta) }
&= &x^{\alpha/2-1} x^{\frac{\alpha}{\alpha-1} (\frac{3-\alpha}{2})}\left(V(\theta)\right)^{(3-\alpha)/2} e^{-(1-\epsilon)x^{\alpha/(\alpha-1)}V(\theta) } \\
&\le& x^{\alpha/2-1}\left(\frac{3-\alpha}{2(1-\epsilon)e}\right)^{\frac{3-\alpha}{2}} 
\le \left(\frac{3-\alpha}{2(1-\epsilon)e}\right)^{\frac{3-\alpha}{2}},
\end{eqnarray*}
where the last inequality follows from the fact that $x\ge1$ and $\alpha<2$. Next, applying Lemma \ref{lemma: Taylor} with $a=1/\ell$, $b=\epsilon V(\theta)$, and $p=\alpha/(\alpha-1)>2$ gives
\begin{eqnarray*}
e^{x/\ell}e^{- \epsilon V(\theta)x^{\alpha/(\alpha-1)}} &\le& e^{m_\epsilon(\theta)/\ell- \epsilon V(\theta)(m_\epsilon(\theta))^{\alpha/(\alpha-1)}} e^{-\xi_\epsilon(\theta)(x-m_\epsilon(\theta))^2/2} \\
&\le&  e^{\epsilon^{1-\alpha}\ell^{-\alpha}/|\cos\left(\frac{\pi\alpha}{2}\right)|} e^{-\xi_\epsilon(\theta)(x-m_\epsilon(\theta))^2/2} \\
&=&  C^{-\epsilon^{1-\alpha}} e^{-\xi_\epsilon(\theta)(x-m_\epsilon(\theta))^2/2}.
\end{eqnarray*}
Here the second inequality follows from the fact that
\begin{eqnarray*}
m_\epsilon(\theta)/\ell- \epsilon V(\theta)(m_\epsilon(\theta))^{\alpha/(\alpha-1)} &=& \left(V(\theta)\right)^{1-\alpha} \epsilon^{1-\alpha} (\alpha-1)^{\alpha-1}(\alpha \ell)^{-\alpha}\\
&\le&\left(V(\pi/2)\right)^{1-\alpha} \epsilon^{1-\alpha} (\alpha-1)^{\alpha-1}(\alpha \ell)^{-\alpha}\\
&=& \epsilon^{1-\alpha}\ell^{-\alpha}/\left|\cos\left(\frac{\pi\alpha}{2}\right)\right|,
\end{eqnarray*}
which, itself, follows from Lemma \ref{lemma: V prop}. Combining the above with \eqref{eq: temp stable joint pdf} gives the result.
\end{proof}

\subsection{Results for $\alpha=1$}

In this subsection we consider the case $\alpha=1$ and $\sigma=1$. The approach is similar to that used for $\alpha\in(1,2)$, but the formulas differ, so we provide a detailed derivation. In this case
$$
C = e^{\frac{2}{\ell\pi}\log\ell} = \ell^{\frac{2}{\ell\pi}}.
$$
An explicit representation of $f$ can be found, in, e.g., Theorem 3.3 of \cite{Nolan:2020}, and is given by
\begin{eqnarray*}
f(x) =   \frac{1}{2} e^{\pi x/2} \int_{-\pi/2}^{\pi/2} V(\theta) e^{-e^{\pi x/2} V(\theta) }\rd \theta ,
\end{eqnarray*}
where
$$
V(\theta) = \frac{1-\frac{2}{\pi}\theta}{\cos(\theta)}\exp\left\{(\theta-\pi/2)\tan(\theta)\right\}, \ \ \ \theta\in[-\pi/2,\pi/2].
$$
Lemma 3.8 in \cite{Nolan:2020} tells us that $V$ is nonnegative, strictly decreasing on $(-\pi/2, \pi/2)$, and takes the values:
$$
V(-\pi/2)=\infty \mbox{ and }V(\pi/2) = \frac{2}{\pi e}.
$$
Clearly, $f$ is a marginal density of the joint density
\begin{eqnarray*}
f^*(x,\theta) = \frac{1}{2} e^{\pi x/2} V(\theta) e^{-e^{\pi x/2} V(\theta)} \ \  x\in\mathbb R, -\pi/2<\theta< \pi/2.
\end{eqnarray*}
and $g$ is a marginal density of the joint density
\begin{eqnarray*}
g^*(x,\theta) = C e^{x/\ell} f^*(x,\theta).
\end{eqnarray*}
 We can now state our main result for $\alpha=1$.

\begin{thrm}\label{thrm: main a1}
Fix $\epsilon\in(0,1)$ and let
$$
m^\sharp_\epsilon = \frac{2}{\pi}\left(1-\log(\ell\epsilon)\right).
$$
1. For $x\le m^\sharp_\epsilon$ and $- \frac{\pi}{2}<\theta\le \frac{\pi}{2}$, we have 
$$
g^*(x,\theta) \le C_1 f^*(x,\theta),
$$
where
$$
C_1 = C e^{m^\sharp_\epsilon/\ell} = e^{\frac{2}{\pi\ell}\left(1-\log \epsilon\right)}.
$$
2. For $x> m^\sharp_\epsilon $ and $- \frac{\pi}{2}<\theta\le \frac{\pi}{2}$, we have
$$
g^*(x,\theta) \le C_{2} \frac{1}{\pi} e^{-\xi(x-m_\epsilon(\theta))^2/2}\sqrt{\frac{2\xi}{\pi}} ,
$$
where
$$
\xi = \frac{\pi}{2\ell}, \ \ \ 
m_\epsilon(\theta) = \frac{2}{\pi}\log\left(\frac{2}{\epsilon \ell\pi V(\theta)}\right),
$$
and
$$
C_{2} =\frac{\pi\sqrt\ell}{2e(1-\epsilon)} \epsilon^{-\frac{2}{\pi\ell}} .
$$
\end{thrm}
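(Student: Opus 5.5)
The plan follows the proof of Theorem~\ref{thrm: main}: factor $e^{-e^{\pi x/2}V(\theta)}$ into two pieces, bound one piece by a constant using \eqref{eq: bound with exp}, and bound the other by a Gaussian kernel using a concavity/Taylor argument around its maximizer.

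\textbf{Part 1.} This is immediate. For $x\le m^\sharp_\epsilon$ we have $e^{x/\ell}\le e^{m^\sharp_\epsilon/\ell}$, so $g^*\le Ce^{m^\sharp_\epsilon/\ell}f^*$. Using $C=\ell^{2/(\ell\pi)}$ and
$$
e^{m^\sharp_\epsilon/\ell}=e^{\frac{2}{\pi\ell}(1-\log\ell-\log\epsilon)},
$$
the two forms of $C_1$ agree.

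\textbf{Part 2, splitting the exponential.} Write
$$
g^*(x,\theta)=\tfrac12 C\,\big[e^{\pi x/2}V(\theta)e^{-(1-\epsilon)e^{\pi x/2}V(\theta)}\big]\,e^{L_\theta(x)},\qquad L_\theta(x)=x/\ell-\epsilon V(\theta)e^{\pi x/2}.
$$
Apply \eqref{eq: bound with exp} with $t=e^{\pi x/2}V(\theta)$, $a=1$ and $b=1-\epsilon$. This bounds the bracket by $1/(e(1-\epsilon))$ uniformly in $x$ and $\theta$.

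\textbf{Part 2, Gaussian bound on $e^{L_\theta}$.} The function $L_\theta$ is strictly concave, and $L_\theta'(x)=1/\ell-\epsilon V(\theta)\tfrac{\pi}{2}e^{\pi x/2}$ vanishes exactly at $x=m_\epsilon(\theta)$.

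Lemma~\ref{lemma: Taylor} is stated for power functions, so instead I would use Taylor's theorem with an intermediate point directly. For $y\ge m_\epsilon(\theta)$,
$$
|L_\theta''(y)|=\epsilon V(\theta)(\pi/2)^2e^{\pi y/2}\ge |L_\theta''(m_\epsilon(\theta))|=\tfrac{\pi}{2\ell}=\xi,
$$
because $|L_\theta''|$ is increasing and $\epsilon V(\theta)\tfrac{\pi}{2}e^{\pi m_\epsilon(\theta)/2}=1/\ell$. Hence, for $x\ge m_\epsilon(\theta)$,
$$
L_\theta(x)\le L_\theta(m_\epsilon(\theta))-\tfrac{\xi}{2}(x-m_\epsilon(\theta))^2 .
$$

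\textbf{Part 2, uniformity in $\theta$.} This is the main point to check. Two facts are needed.

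First, the condition $x\ge m_\epsilon(\theta)$ must follow from $x>m^\sharp_\epsilon$. Since $V$ is decreasing with minimum $V(\pi/2)=2/(\pi e)$, $m_\epsilon(\theta)$ is largest at $\theta=\pi/2$. There
$$
m_\epsilon(\pi/2)=\tfrac{2}{\pi}\log\big(e/(\epsilon\ell)\big)=m^\sharp_\epsilon .
$$

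Second, the peak value must be controlled. At the maximizer $\epsilon V e^{\pi m/2}=2/(\pi\ell)$, so
$$
L_\theta(m_\epsilon(\theta))=\frac{2}{\pi\ell}\Big(\log\frac{2}{\epsilon\ell\pi V(\theta)}-1\Big)\le-\frac{2}{\pi\ell}\log(\epsilon\ell),
$$
again using $V(\theta)\ge 2/(\pi e)$. Therefore $C e^{L_\theta(m_\epsilon(\theta))}\le \ell^{2/(\pi\ell)}(\epsilon\ell)^{-2/(\pi\ell)}=\epsilon^{-2/(\pi\ell)}$.

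\textbf{Assembling the bound.} Combining the three factors gives
$$
g^*(x,\theta)\le\frac{\epsilon^{-2/(\pi\ell)}}{2e(1-\epsilon)}\,e^{-\xi(x-m_\epsilon(\theta))^2/2}.
$$
It remains to check that this equals the stated form $C_2\frac1\pi\sqrt{2\xi/\pi}$ times the Gaussian factor. Since $\sqrt{2\xi/\pi}=\ell^{-1/2}$, we get $C_2\cdot\frac{1}{\pi}\cdot\ell^{-1/2}=\frac{\epsilon^{-2/(\pi\ell)}}{2e(1-\epsilon)}$, which is exactly the constant above.
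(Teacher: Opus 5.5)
Your proof is correct and follows essentially the same route as the paper. Both arguments peel off a factor $e^{-(1-\epsilon)e^{\pi x/2}V(\theta)}$ and bound it via \eqref{eq: bound with exp} with $a=1$, $b=1-\epsilon$; both Gaussian-bound the remaining $e^{x/\ell-\epsilon V(\theta)e^{\pi x/2}}$ about $m_\epsilon(\theta)$; and both control the peak value and the condition $m_\epsilon(\theta)\le m^\sharp_\epsilon$ using $V(\theta)\ge V(\pi/2)=2/(\pi e)$. The one difference is that the paper cites Lemma~\ref{lemma: Taylor a1} (with $a=1/\ell$, $b=\pi/2$, $c=\epsilon V(\theta)$), whose proof it omits, whereas you prove that step inline with a second-order Taylor remainder and the monotonicity of $|L_\theta''|$.
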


\begin{remark}\label{remark: alpha1 notation}
To simplify the discussion going forward, in the $\alpha=1$ case,  we sometimes set $\theta_0=\pi/2$ and write $\xi=\xi_\epsilon(\theta)$, even though $\xi$ does not actually depend on $\theta$ or $\epsilon$ in this case. This notation unifies the presentation of Theorems \ref{thrm: main} and \ref{thrm: main a1}, with the only difference being the formulas for $m^\sharp_\epsilon$, $C_1$, $C_2$, $\xi_\epsilon(\theta)$, and $m_\epsilon(\theta)$.
\end{remark}

Before proving the theorem, we give a lemma whose proof is similar to that of Lemma \ref{lemma: Taylor a1} and is thus omitted.

\begin{lemma}\label{lemma: Taylor a1}
Let $a,b,c>0$ and let
$
m= \frac{1}{b}\log\left(\frac{a}{bc}\right).
$
If
$$
L(x) = ax-  ce^{bx}, \ \ x\in\mathbb R,
$$
then, for $x\ge m$,
$$
L(x) \le L(m) - \frac{ab}{2}(x-m)^2 = \frac{a}{b}\left(\log\left(\frac{a}{bc}\right)-1\right) - \frac{ab}{2}(x-m)^2.
$$
\end{lemma}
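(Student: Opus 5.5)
The plan is to mirror the proof of Lemma \ref{lemma: Taylor}: locate the critical point of $L$, compute the curvature there, and use the fact that $L''$ only becomes more negative to the right of $m$. Everything is explicit because $L$ involves only a linear term and an exponential.

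First I would compute the derivatives:
$$
L'(x) = a - bc\,e^{bx}, \qquad L''(x) = -b^2 c\, e^{bx}, \qquad L'''(x) = -b^3 c\, e^{bx}<0 .
$$
The point $m=\frac1b\log\left(\frac{a}{bc}\right)$ is defined exactly so that $e^{bm}=a/(bc)$. This gives $L'(m)=a-bc\cdot\frac{a}{bc}=0$ and $L''(m)=-b^2c\cdot \frac{a}{bc}=-ab$, so $|L''(m)|=ab$. Substituting $e^{bm}$ into $L$ gives the closed form
$$
L(m)=am-c\cdot\frac{a}{bc}=\frac ab\log\left(\frac{a}{bc}\right)-\frac ab=\frac{a}{b}\left(\log\left(\frac{a}{bc}\right)-1\right),
$$
which is the claimed equality.

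For the inequality, I would apply Taylor's theorem with Lagrange remainder at $m$. Since $L'(m)=0$, we have
$$
L(x)=L(m)-\frac{ab}{2}(x-m)^2+\frac{L'''(x^*)}{6}(x-m)^3
$$
for some $x^*$ between $m$ and $x$. When $x\ge m$, the cubic term is nonpositive because $L'''<0$ everywhere, and dropping it gives the bound. An equivalent route avoids the remainder: $L''$ is decreasing, so $L''(y)\le -ab$ for $y\ge m$. Integrating from $m$ gives $L'(x)\le -ab(x-m)$, and integrating once more gives the result.

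There is no real obstacle here. The only point needing care is the sign bookkeeping: the restriction $x\ge m$ is what makes $(x-m)^3\ge0$, so that the negative third derivative helps rather than hurts. The hypothesis $a,b,c>0$ ensures that $m$ is well defined and that $L'''<0$.
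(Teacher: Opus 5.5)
Your proof is correct and takes essentially the paper's approach. The paper omits this proof as similar to that of Lemma \ref{lemma: Taylor}: expand about the critical point $m$, use $L'(m)=0$ and $|L''(m)|=ab$, and drop the cubic remainder, which is nonpositive for $x\ge m$ because $L'''<0$; your closed-form evaluation of $L(m)$ and the double-integration alternative are also fine.
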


\begin{proof}[Proof of Theorem \ref{thrm: main a1}]
The first part is immediate; we now turn to the second part. By arguments similar to those in the proof of Lemma \ref{lemma: V prop}, we have $m_\epsilon(\theta) \le m^\sharp_\epsilon$. From here, we get
\begin{eqnarray*}
C\frac{1}{2} e^{x/\ell} e^{\pi x/2} V(\theta) e^{-e^{\pi x/2} V(\theta)} &\le& C\frac{1}{2e(1-\epsilon)} e^{x/\ell-\epsilon e^{\pi x/2} V(\theta)} \\
&\le& C\frac{1}{2e(1-\epsilon)} e^{\frac{2}{\pi\ell}\left(\log\left(\frac{2}{\ell\pi\epsilon V(\theta)}\right)-1\right)} e^{-\xi (x-m_\epsilon(\theta))^2/2}\\
&\le& C\frac{1}{2e(1-\epsilon)} e^{\frac{2}{\pi\ell}\log\left(\frac{ 1}{\ell\epsilon}\right)} e^{-\xi (x-m_\epsilon(\theta))^2/2}\\
&=& \frac{\pi\sqrt\ell}{2e(1-\epsilon)} \epsilon^{-\frac{2}{\pi\ell}} e^{-\xi (x-m_\epsilon(\theta))^2/2}\sqrt\frac{2\xi}{\pi}\frac{1}{\pi},
\end{eqnarray*}
where the first inequality follows by applying \eqref{eq: bound with exp} with $a=1$ and $b=1-\epsilon$, the second by applying Lemma \ref{lemma: Taylor a1} with $a=1/\ell$, $b=\pi/2$, and $c=\epsilon V(\theta)$, and the third by the fact that $V(\theta)\ge V(\pi/2)=2/(\pi e)$.
\end{proof}

\section{Simulation Algorithms}\label{sec: sim alg}

From Theorems \ref{thrm: main} and \ref{thrm: main a1}, we immediately get the following result. See Remark \ref{remark: alpha1 notation} for an explanation of some of the notation in the $\alpha=1$ case.

\begin{cor}
For any $\epsilon\in(0,1)$ and $p_1,p_2\in(0,1)$ with $p_1+p_2=1$, we have
\begin{eqnarray}\label{eq: main bound}
g^*(x,\theta)\le K h(x,\theta),  \ \ x\in\mathbb R, \ -\frac{\pi}{2}\le\theta\le \frac{\pi}{2},
\end{eqnarray}
where
\begin{eqnarray}\label{eq: defn h}
h(x,\theta) = p_1 h_{1}(x,\theta) + p_2 h_{2}(x,\theta)
\end{eqnarray}
with
$$
h_{1}(x,\theta) = f^*(x,\theta), \ \ x\in\mathbb R,\ -\pi/2<\theta<\pi/2,
$$
$$
h_{2}(x,\theta) = \frac{\alpha}{\pi} e^{-\xi_\epsilon(\theta)(x-m_\epsilon(\theta))^2/2}\sqrt{\frac{2\xi_\epsilon(\theta)}{\pi}},  \ \ -\theta_0<\theta\le \frac{\pi}{2},\  x\in\left(m_\epsilon(\theta),\infty\right),
$$
and
$$
K = \max\left\{C_{1}/p_1, C_{2}/p_2 \right\}.
$$
\end{cor}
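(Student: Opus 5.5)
The plan is to prove \eqref{eq: main bound} by splitting the domain of $(x,\theta)$ according to the threshold $m^\sharp_\epsilon$, in the same way as Theorems \ref{thrm: main} and \ref{thrm: main a1}. Both $h_1$ and $h_2$ are nonnegative, and $K\ge C_1/p_1$, $K\ge C_2/p_2$. So it is enough to show that on each region $g^*$ is bounded by one of $C_1h_1=(C_1/p_1)\,p_1h_1$ or $C_2h_2=(C_2/p_2)\,p_2h_2$. We then drop the other, nonnegative, summand of $h$.

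\emph{Region $x\le m^\sharp_\epsilon$.} Part 1 of Theorem \ref{thrm: main} (or of Theorem \ref{thrm: main a1} when $\alpha=1$) gives $g^*(x,\theta)\le C_1f^*(x,\theta)=C_1h_1(x,\theta)$. Hence $g^*\le (C_1/p_1)\,p_1h_1\le K h$.

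\emph{Region $x>m^\sharp_\epsilon$.} First suppose $\theta\le-\theta_0$. This case only arises when $\alpha\in(1,2)$; for $\alpha=1$ we set $\theta_0=\pi/2$ and the range of $\theta$ is $(-\pi/2,\pi/2)$, so there is nothing to check. By the definition of $f^*$, the density vanishes for $x>0$ and $\theta<-\theta_0$. Since $m^\sharp_\epsilon\ge1>0$, we get $g^*=0\le Kh$.

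Next suppose $-\theta_0<\theta\le\pi/2$. Part 2 of the relevant theorem bounds $g^*(x,\theta)$ by $C_2$ times the expression defining $h_2$. For $\alpha=1$ the prefactor $\frac1\pi$ in Theorem \ref{thrm: main a1} agrees with $\frac{\alpha}{\pi}$. We must also confirm that $(x,\theta)$ lies in the support where $h_2$ is defined by that formula, namely $x>m_\epsilon(\theta)$. This follows from Lemma \ref{lemma: V prop}(2), which gives $m_\epsilon(\theta)\le m^\sharp_\epsilon<x$; in the $\alpha=1$ case the proof of Theorem \ref{thrm: main a1} already notes $m_\epsilon(\theta)\le m^\sharp_\epsilon$. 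Therefore $g^*\le (C_2/p_2)\,p_2h_2\le Kh$.

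The only delicate point is bookkeeping at the boundaries. The values $\theta=\pm\pi/2$ and $\theta=-\theta_0$ form a Lebesgue-null set where $f^*$ is either defined by continuity or set to zero. We interpret $h_2$ as zero outside its stated support; if it were not zero there, the inequality would be even easier. The point $x=m^\sharp_\epsilon$ is covered by part 1, and $x>m^\sharp_\epsilon$ by part 2. With these conventions the three cases above exhaust all $(x,\theta)$, which proves the corollary.
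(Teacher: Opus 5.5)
Your proof is correct and follows the paper's argument. The paper only says the corollary follows immediately from Theorems \ref{thrm: main} and \ref{thrm: main a1}, and you carry out exactly that split at $m^\sharp_\epsilon$ using $K\ge C_1/p_1$ and $K\ge C_2/p_2$, while also supplying two details the paper leaves implicit: that $g^*=0$ for $x>m^\sharp_\epsilon\ge 1$ and $\theta<-\theta_0$ when $\alpha\in(1,2)$, and that $m_\epsilon(\theta)\le m^\sharp_\epsilon$ places $(x,\theta)$ in the support of $h_2$.
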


Here, $h_1$ and $h_2$ are bivariate densities, for which we now develop simulation methods. With these, we will be able to simulate from $h$ and, thus, through the use of rejection sampling, from $g^*$ and $g$. We can simulate from $h_1=f^*$ using Algorithm \ref{alg: h1}, which is a variant of the method of \cite{Chambers:Mallows:Stuck:1976} for simulating from stable distributions. The algorithm is justified by the following result, whose proof is similar to the proof of Theorem 1.3 in \cite{Nolan:2020} and is thus omitted. 

\begin{algorithm}[t]
    \caption{Simulation from $h_{1}=f^*$}\label{alg: h1}
    \begin{algorithmic}[1]
        \State \textbf{independently simulate} $\Theta\sim U(-\pi/2,\pi/2)$ \textbf{and} $W\sim\mathrm{Exp}(1)$
        \If{$\alpha=1$}
                        \State \textbf{set} $X = \frac{2}{\pi}\left[ \left(\frac{\pi}{2}-\Theta \right)\tan(\Theta)+ \log\left(\frac{\pi W\cos(\Theta)}{\pi-2\Theta}\right)\right]$
            \Else 
		        \State \textbf{set} $X = \mathrm{sign}(\theta_0+\Theta)\left(V(\Theta)/W\right)^{(1-\alpha)/\alpha}$
           \EndIf
       \State  \textbf{return} $( X,\Theta)$
    \end{algorithmic}
\end{algorithm}

\begin{algorithm}[t]
    \caption{Simulation from $h_{2}$. Tuning parameter $\epsilon\in(0,1)$.}\label{alg: h2}
    \begin{algorithmic}[1]
        \State \textbf{independently simulate} $\Theta\sim U(-\theta_0,\pi/2)$ and $Z\sim N\left(0, 1\right)$
        \State \textbf{set} $X = m_\epsilon(\Theta)+ |Z|/\xi_\epsilon(\Theta)$
       \State  \textbf{return} $(X,\Theta)$
    \end{algorithmic}
\end{algorithm}

\begin{prop}
Let $\Theta\sim U(-\pi/2,\pi/2)$ and $W\sim \mathrm{Exp}(1)$ be independent random variables. If $\alpha\in(1,2)$, set
$$
X = \mathrm{sign}(\theta_0+\Theta)\left(V(\Theta)/W\right)^{(1-\alpha)/\alpha}
$$
and if $\alpha=1$, set
$$
X = \frac{2}{\pi}\left[ \left(\frac{\pi}{2}-\Theta \right)\tan(\Theta)+ \log\left(\frac{\pi W\cos(\Theta)}{\pi-2\Theta}\right)\right].
$$
We have $(X,\Theta)\sim h_1=f^*$. 
\end{prop}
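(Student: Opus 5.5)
Since $\Theta$ is uniform on $(-\pi/2,\pi/2)$ with density $1/\pi$, it suffices to show that, given $\Theta=\theta$, the conditional density of $X$ is $\pi f^*(x,\theta)$. Equivalently, we can verify directly that the joint density of $(X,\Theta)$ equals $f^*$. To do this, we fix $\theta$ and apply the one-dimensional change of variables $w\mapsto x$ to the $\mathrm{Exp}(1)$ density $e^{-w}$. This is the same route as the proof of Theorem 1.3 in \cite{Nolan:2020}, except that we keep track of the joint law rather than integrating out $\theta$.

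\emph{Case $\alpha\in(1,2)$.} Fix $\theta\in(-\theta_0,\pi/2)$, so that $\mathrm{sign}(\theta_0+\theta)=1$. The map $w\mapsto x=(V(\theta)/w)^{(1-\alpha)/\alpha}=(w/V(\theta))^{(\alpha-1)/\alpha}$ is a strictly increasing bijection from $(0,\infty)$ onto $(0,\infty)$. Its inverse is $w=V(\theta)x^{\alpha/(\alpha-1)}$, with $\rd w/\rd x=\frac{\alpha}{\alpha-1}V(\theta)x^{1/(\alpha-1)}$. The conditional density of $X$ given $\Theta=\theta$ is therefore
$$\frac{\alpha}{\alpha-1}V(\theta)x^{1/(\alpha-1)}e^{-x^{\alpha/(\alpha-1)}V(\theta)},\quad x>0.$$
Multiplying by $1/\pi$ gives exactly $f^*(x,\theta)$ on $\{x>0,\,-\theta_0<\theta<\pi/2\}$.

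For $\theta\in(-\pi/2,-\theta_0)$ the sign is $-1$, so $X=-(w/V(\theta))^{(\alpha-1)/\alpha}$ ranges over $(-\infty,0)$. The identical computation with $|x|$ in place of $x$ gives the second branch of $f^*$. The single point $\theta=-\theta_0$ has probability zero and can be ignored. Since the two $\theta$-regions map into the disjoint half-lines $x>0$ and $x<0$, the joint density is $f^*$ everywhere.

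\emph{Case $\alpha=1$.} First simplify the formula for $X$ using the definition of $V$. We have
$$\log V(\theta)=\log\frac{\pi-2\theta}{\pi\cos\theta}+(\theta-\pi/2)\tan\theta,$$
so
$$X=\frac{2}{\pi}\left[\log W-\log V(\Theta)\right],\qquad\text{that is,}\qquad W=V(\Theta)e^{\pi X/2}.$$
This algebraic identification is the only step that needs care, and it should be checked sign by sign. For fixed $\theta$, the map $w\mapsto x$ is an increasing bijection $(0,\infty)\to\mathbb R$ with $\rd w/\rd x=\frac{\pi}{2}V(\theta)e^{\pi x/2}$. The conditional density is thus $\frac{\pi}{2}e^{\pi x/2}V(\theta)e^{-e^{\pi x/2}V(\theta)}$. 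Multiplying by $1/\pi$ gives $f^*(x,\theta)$.

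I expect no real obstacle. The points needing attention are the correct orientation of the sign factor relative to the supports in the definition of $f^*$ (handled by Lemma \ref{lemma: V prop}, which gives $V>0$ on each piece, so the maps are well defined) and the logarithmic rewriting in the $\alpha=1$ case.
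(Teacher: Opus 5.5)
Your proof is correct and follows essentially the route the paper has in mind: the paper omits the proof and points to the conditioning argument behind Theorem 1.3 in \cite{Nolan:2020}, and you condition on $\Theta$ and invert $W=V(\Theta)|X|^{\alpha/(\alpha-1)}$ (for $\alpha\in(1,2)$) and $W=V(\Theta)e^{\pi X/2}$ (for $\alpha=1$) to get the conditional density $\pi f^*(\cdot,\Theta)$; your algebra checks out in both cases. One cosmetic point: for $\alpha=1$, positivity of $V$ comes from the quoted Lemma 3.8 of \cite{Nolan:2020}, not from Lemma \ref{lemma: V prop}, which is stated only for $\alpha\in(1,2)$.
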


We can simulate from $h_2$ using Algorithm \ref{alg: h2}. To explain the algorithm, we first recall the following fact. If $Z\sim N(0,1)$ and $\sigma>0$, $\mu\in\mathbb R$, then the random variable $X=\sigma|Z|+\mu$ has density 
$$
\sqrt{\frac{2}{\pi\sigma^2}} e^{-\frac{(x-\mu)^2}{2\sigma^2}}, \ \ x>\mu.
$$
This is the half normal distribution with location parameter $\mu$ and scale parameter $\sigma$. It is easily seen that $h_2$ is the joint distribution of $\Theta$ and $X$, where the marginal distribution of $\Theta$ is $U(-\theta_0,\pi/2)$ and the conditional distribution of $X$ given $\Theta$ is half normal with location parameter $\mu =m_\epsilon(\Theta)$ and scale parameter $\sigma=1/\sqrt{\xi_\epsilon(\Theta)}$. Unconditionally, the distribution of $X$ is a mixture of half normal distributions.

\begin{algorithm}[t]
    \caption{Simulation from $h$. Tuning parameters $\epsilon\in(0,1)$, $p_1,p_2\in(0,1)$ with $p_1+p_2=1$. }\label{alg: h}
    \begin{algorithmic}[1]
        \State \textbf{simulate} $U\sim U(0,1)$ 
        \If{$U\le p_1$} 
    \State \textbf{simulate} $(X,\Theta)$ using Algorithm \ref{alg: h1} 
\Else
    \State \textbf{simulate} $(X,\Theta)$ using Algorithm \ref{alg: h2}
\EndIf 
            \State \textbf{return}  $(X,\Theta)$ 
    \end{algorithmic}
\end{algorithm}

Since $h$ has the mixture representation given in \eqref{eq: defn h}, we can simulate from it using Algorithm \ref{alg: h}. From here, the bound in \eqref{eq: main bound} allows us to simulate from $g^*$ using rejection sampling, as summarized in Algorithm \ref{alg: g}. We can now simulate from $\mathrm{TS}_-(\alpha,\ell,\sigma)$ by first simulating $(X,\Theta)$ using  Algorithm \ref{alg: g}, and then applying the result in Proposition \ref{prop: get sigma}. The procedure is summarized in Algorithm \ref{alg: TS}.

On a given iteration of Algorithm \ref{alg: g}, the probability of acceptance is $1/K$ and the expected number of iterations until the first acceptance is $K$. Thus, the smaller the value of $K$, the more efficient the algorithm. Ideally, we should select the tuning parameters ($\epsilon,p_1,p_2$) that minimize $K$. Determining the optimal way to do this is left for future work.

\begin{algorithm}[t]
    \caption{Simulation from $g^*$. Tuning parameters $\epsilon\in(0,1)$, $p_1,p_2\in(0,1)$ with $p_1+p_2=1$.} \label{alg: g}
    \begin{algorithmic}[1]
        \State \textbf{simulate} $U\sim U(0,1)$ 
        \State \textbf{simulate} $(X,\Theta)$ using Algorithm \ref{alg: h}
        \State \textbf{set} 
        $
        \varphi = \frac{g^*(X,\Theta)}{K h(X,\Theta)}
        $
       \If{$U\le \varphi$}
       \State  \textbf{return} $(X,\Theta)$
       \Else
              \State \textbf{goto} Step 1
       \EndIf
    \end{algorithmic}
\end{algorithm}

\begin{algorithm}[t]
    \caption{Simulation from $\mathrm{TS}_-(\alpha,\ell,\sigma)$, $\alpha\ge1$, $\ell>0$, $\sigma>0$.  Tuning parameters $\epsilon\in(0,1)$, $p_1,p_2\in(0,1)$ with $p_1+p_2=1$.}\label{alg: TS}
    \begin{algorithmic}[1]
    \State \textbf{simulate} $(X,\Theta)$ using Algorithm \ref{alg: g}
        \If{$\alpha=1$}
                        \State \textbf{set} $Y = \sigma X-\frac{2}{\pi}\sigma\log\sigma$
            \Else 
		        \State \textbf{set} $Y = \sigma X$
          \EndIf
       \State  \textbf{return} $Y$
    \end{algorithmic}
\end{algorithm}

\section{Simulations}\label{sec: sims}

In this section, we conduct a small simulation study to illustrate the performance of our methodology. For several choices of the parameters, we use Algorithm \ref{alg: h} to simulate $N=10^7$ observations from the proposal distribution $h$. We then apply the rejection step from Algorithm \ref{alg: g} to determine how many observations are retained. For the tuning parameters, we set  $p_1 = p_2=\frac{1}{2}$ and choose values of $\epsilon$ that seem to work well, although we did not fully optimize these values. Our results are given in Table \ref{tab: sim results} and Figure \ref{fig: sim results}. In the table, we give the values of the distribution parameters $\alpha$ and $\ell$ (we always take $\sigma=1$), our choice of the tuning parameter $\epsilon$, the value of $K$, and the number $n$ of observations retained. From the theory, we know that the algorithm works best when $K$ is small. The smallest value that we obtained is $K=2.33$, which resulted in $4,294,523$ observations, while the largest is $8.66$ and resulted in $1,155,766$ observations. In the figure, we plot the kernel density estimator (KDE) for each simulated dataset with the true density overlaid. We can see that the fit is almost perfect. To evaluate the density of the TS distribution, we used \eqref{eq: g defin}, where the stable density was evaluated using the ``stabledist'' package for R.  

\begin{table}
\centering
\begin{tabular}{ccccc}
\hline
$\alpha$ & $\ell$&  $\epsilon$ &  $K$ & $n$ \\ 
\hline
$1$ & $1$ & 0.6 & $5.23$ & 1,912,882 \\
$1$ & $2$ & 0.5 & $4.08$ & 2,455,458 \\
$1$ & $5$ & 0.1 & $3.85$ & 2,600,418 \\
$1.1$ & $1$ & 0.4 & $8.13$ & 1,230,945\\
$1.1$ & $2$ & 0.4 & $8.01$ & 1,248,302\\
$1.1$ & $5$ & 0.2 & $8.66$ & 1,155,766 \\
$1.5$ & $1$ & 0.8 & $5.21$ & 1,918,387 \\
$1.5$ & $2$ & 0.6 & $3.19$ & 3,130,498  \\
$1.5$ & $5$ & 0.3 & $3.00$ & 3,326,781 \\
$1.9$ & $1$ & 0.9 & $6.02$ & 1,659,758 \\
$1.9$ & $2$ & 0.8 & $2.86$ & 3,492,037 \\
$1.9$ & $5$ & 0.8 & $2.33$ & 4,294,523
\end{tabular}
\caption{Results of the simulation study. Here, $\alpha$ and $\ell$ are the distribution parameters, $\epsilon$ is a tuning parameter, $1/K$ is the probability of acceptance, and $n$ is the number of observations retained after simulating $N=10^7$ observations from the proposal distribution.  In all cases, we take $\sigma=1$. }\label{tab: sim results}
\end{table}

\begin{figure}
\label{fig: error in sims}
\begin{tabular}{ccc}
\includegraphics[width=0.31\textwidth]{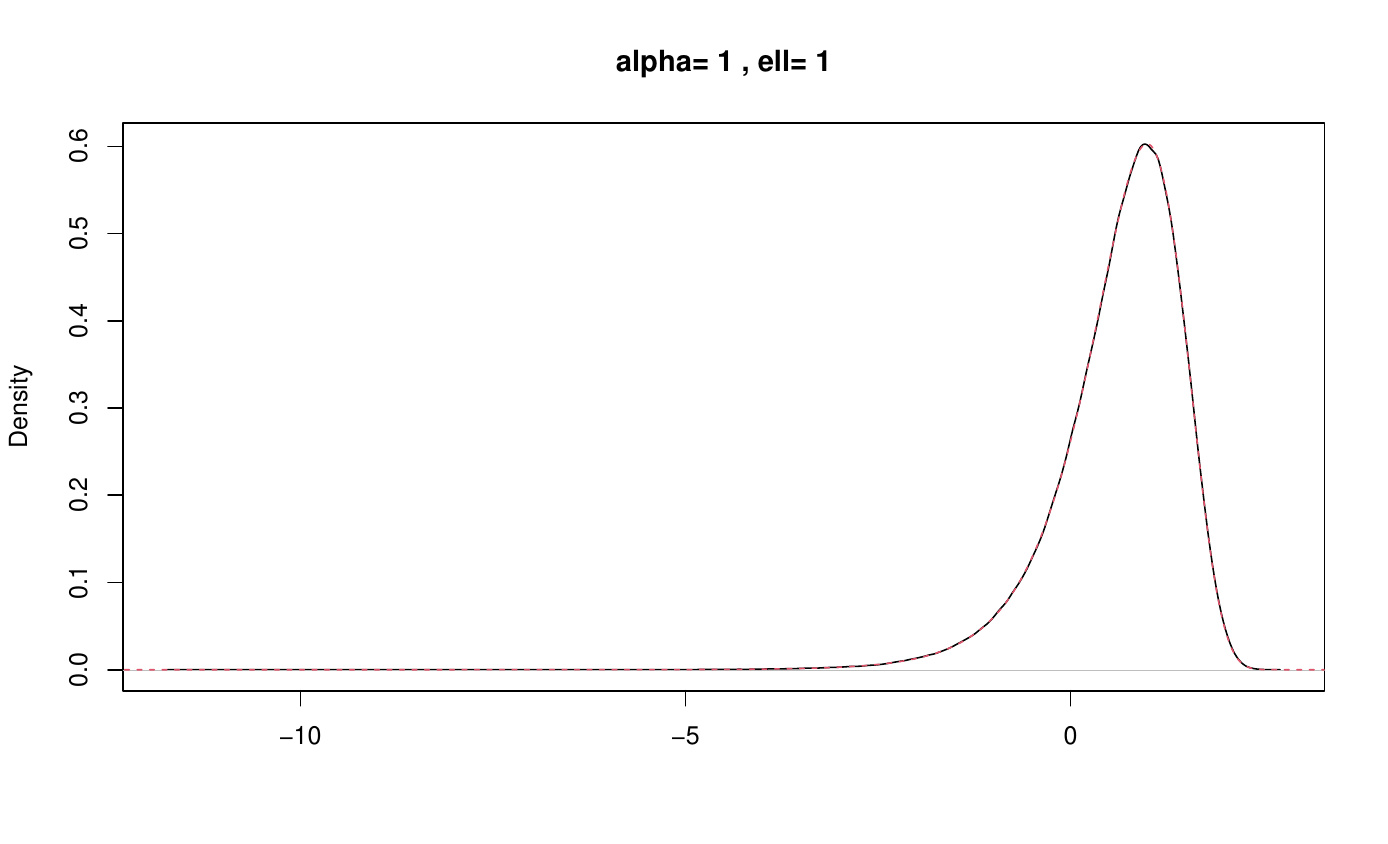} & \includegraphics[width=0.31\textwidth]{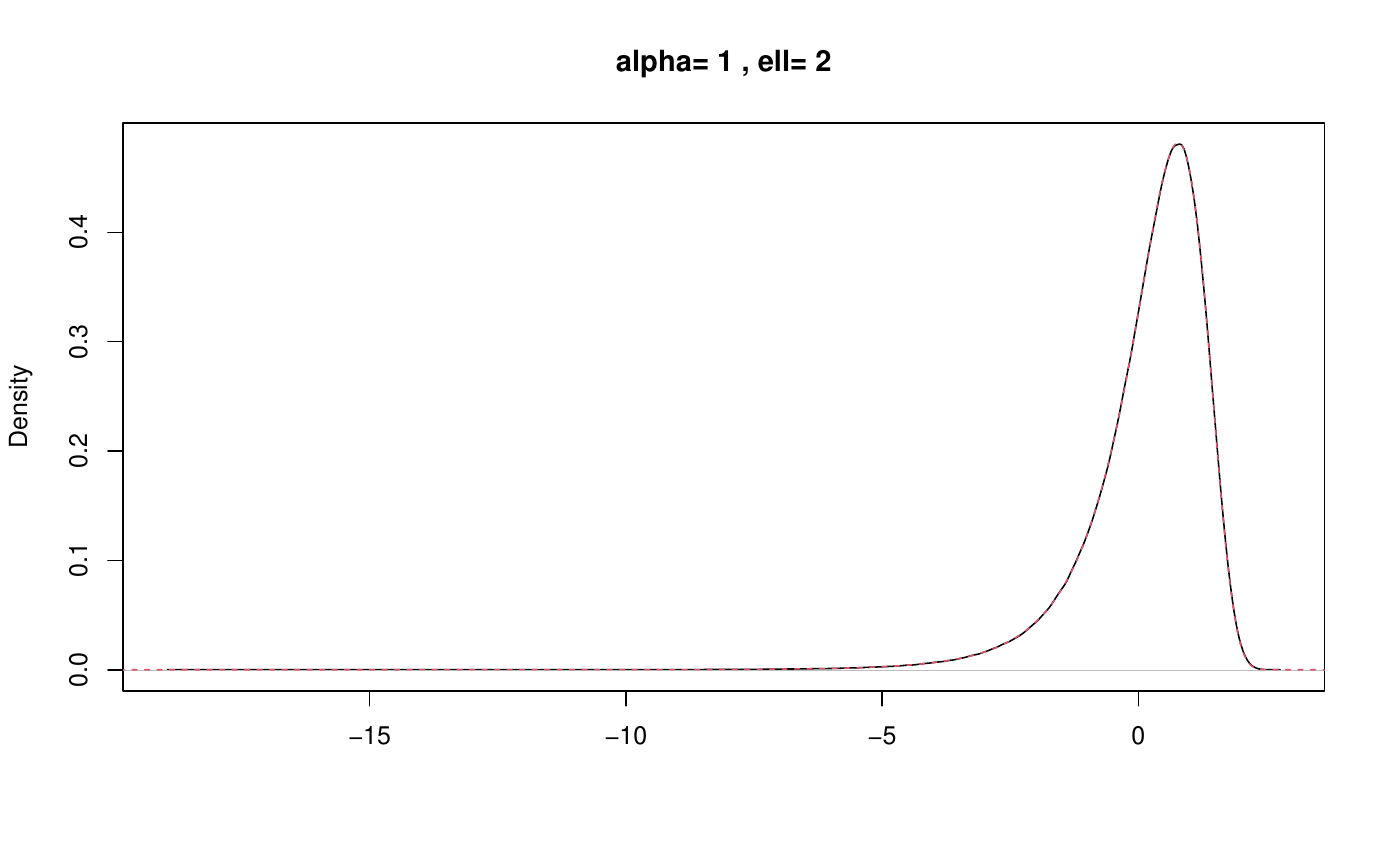} &  \includegraphics[width=0.31\textwidth]{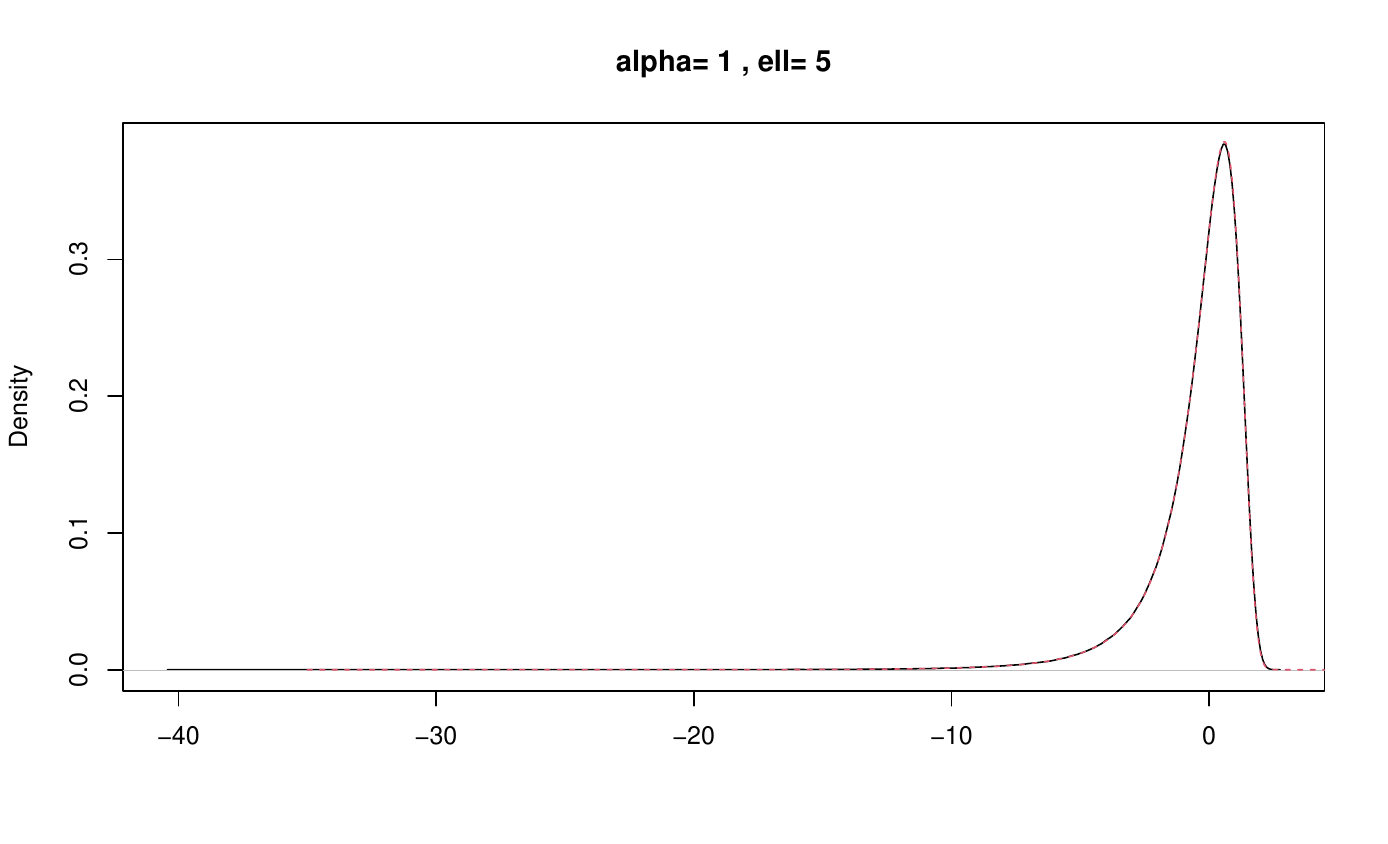}  \\
\includegraphics[width=0.31\textwidth]{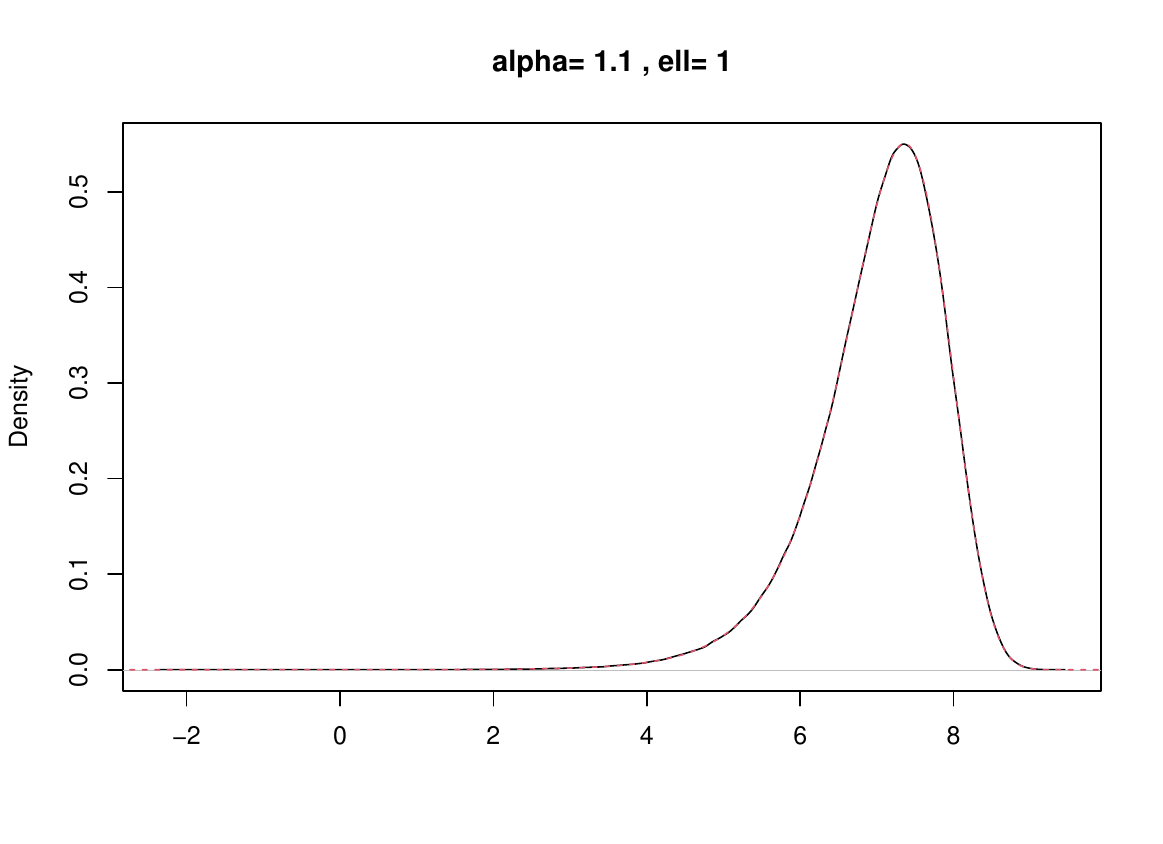} & \includegraphics[width=0.31\textwidth]{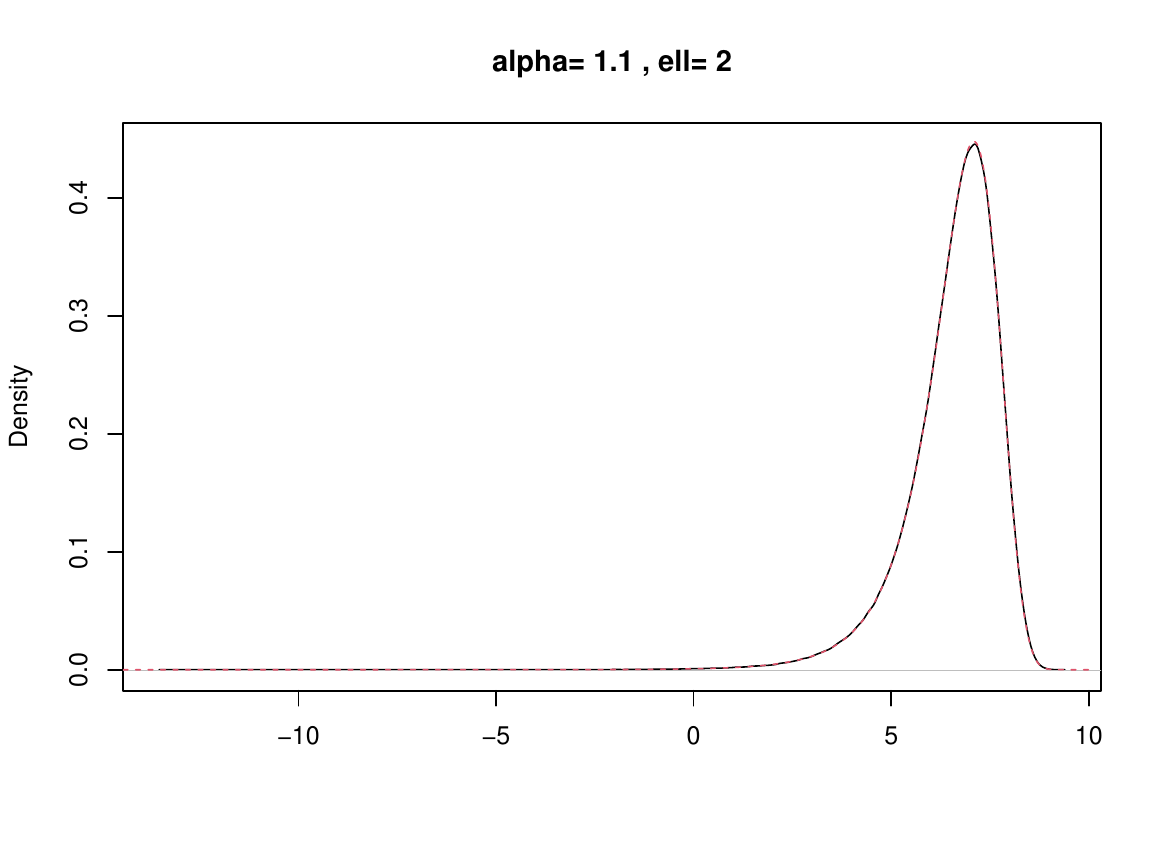} &  \includegraphics[width=0.31\textwidth]{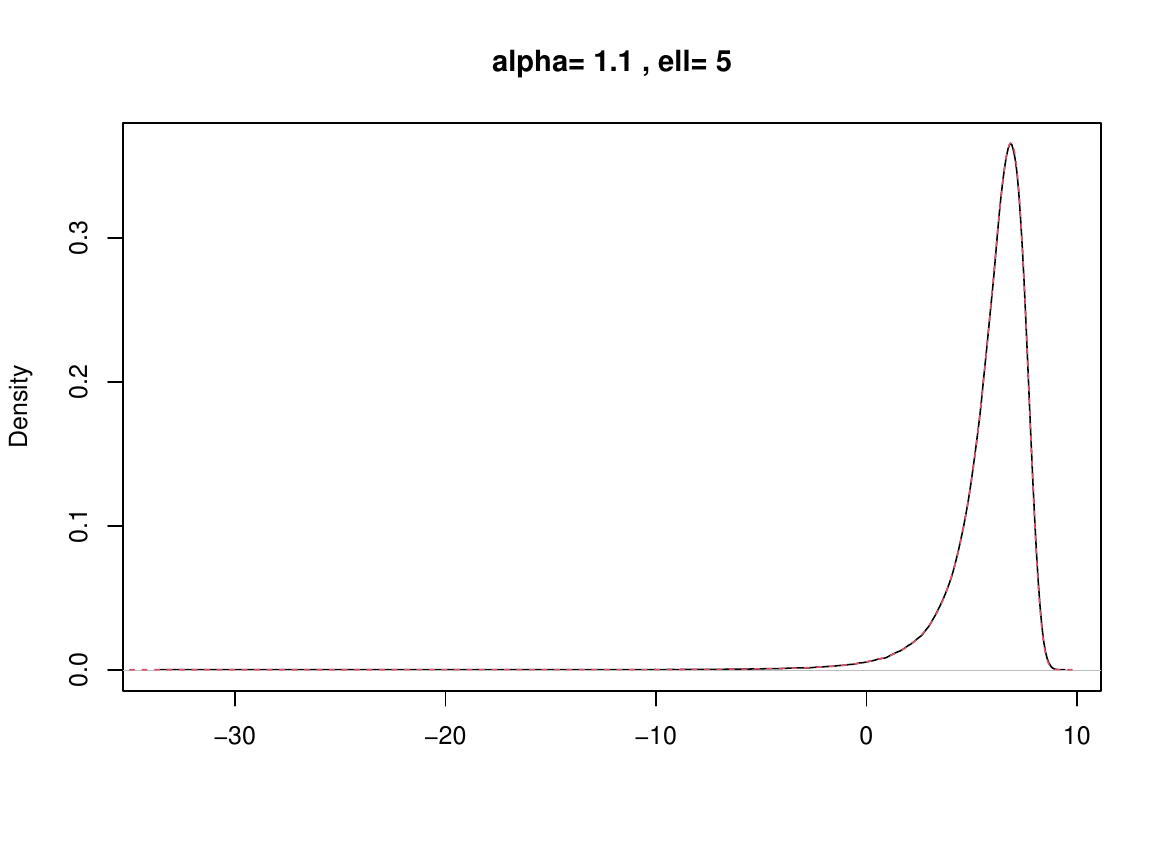}  \\
\includegraphics[width=0.31\textwidth]{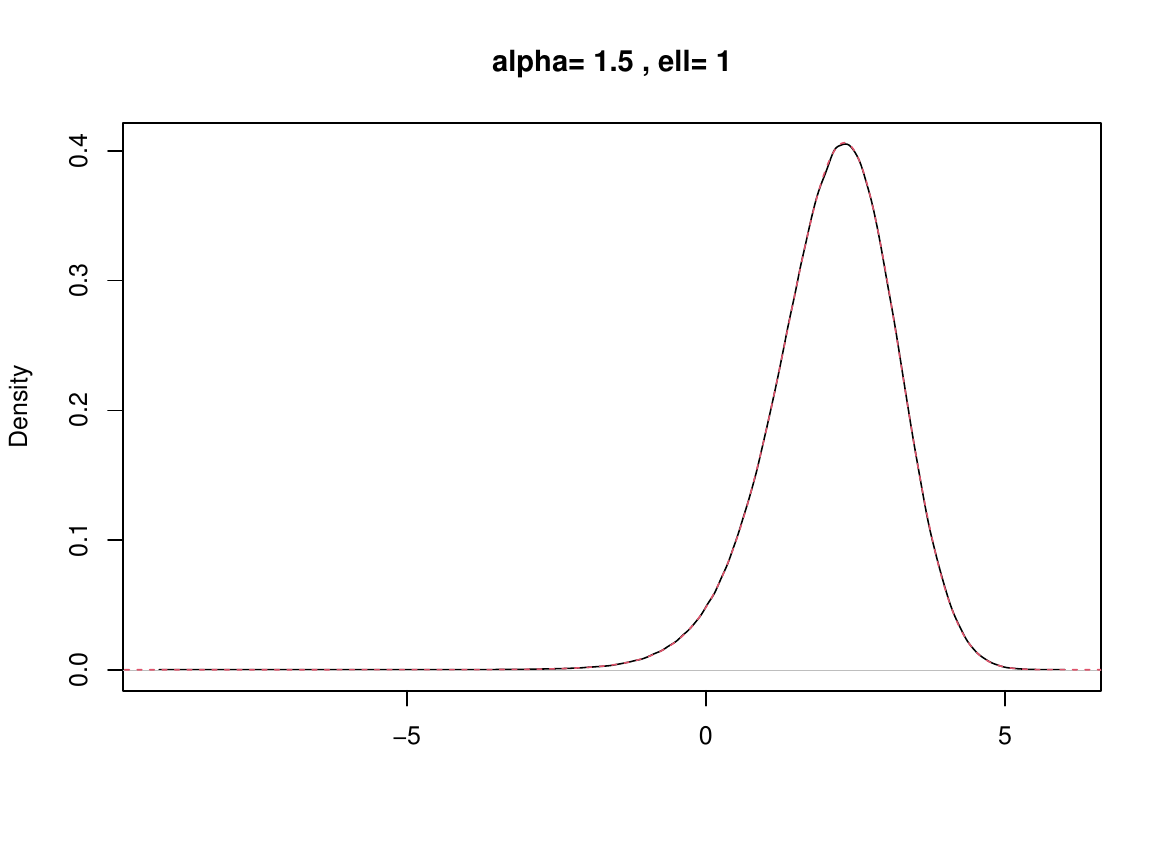} & \includegraphics[width=0.31\textwidth]{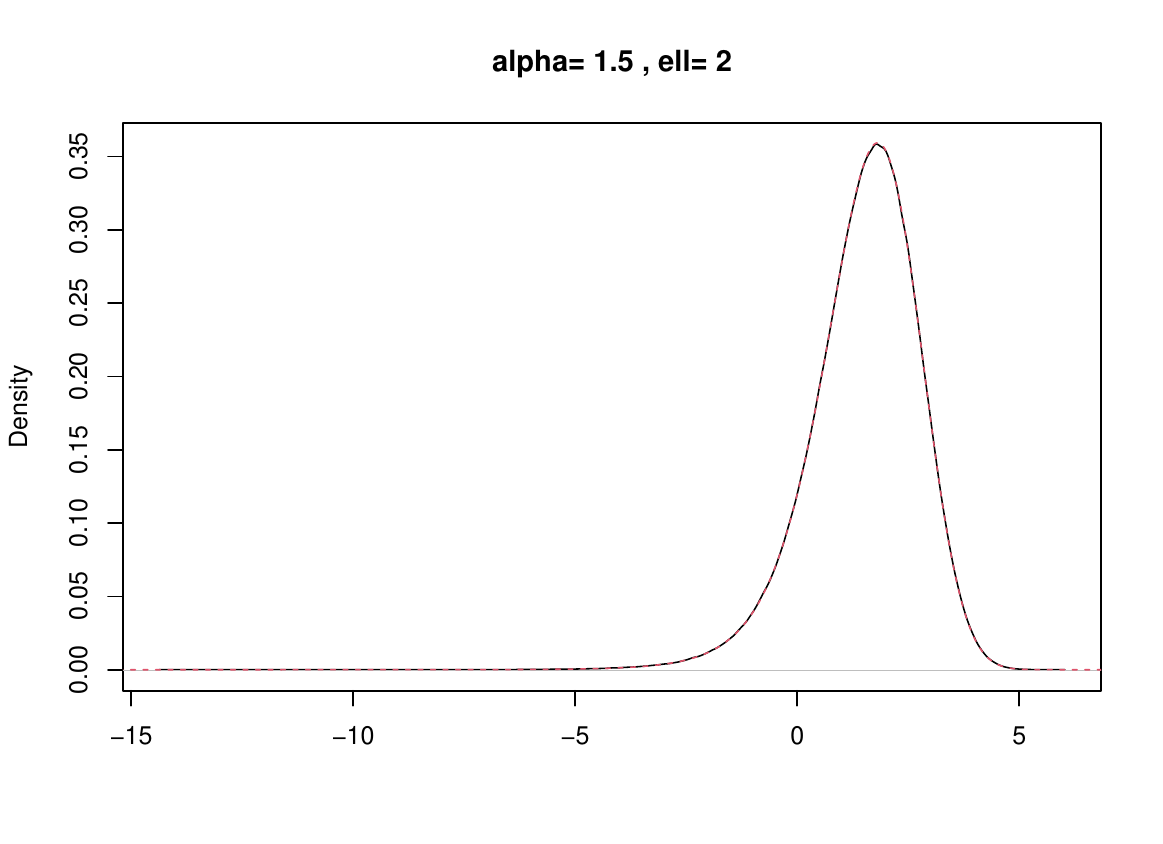} &  \includegraphics[width=0.31\textwidth]{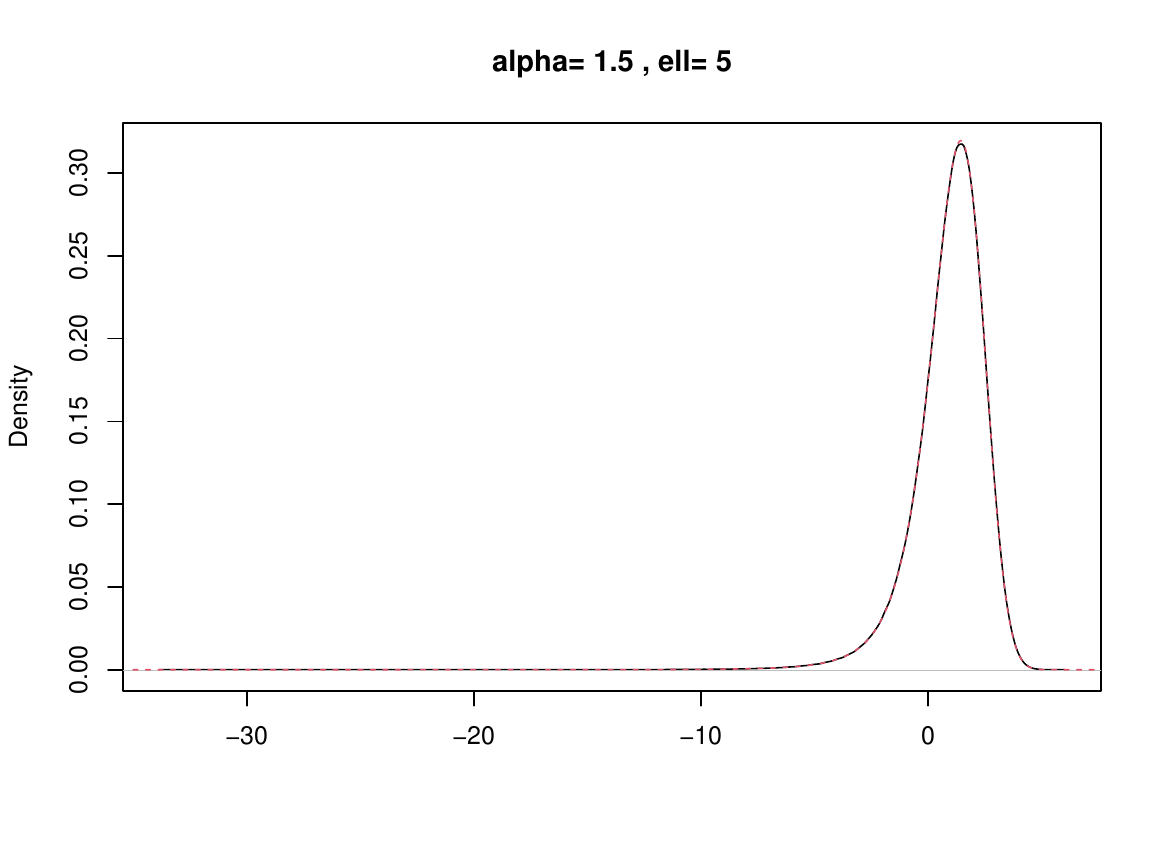}  \\
\includegraphics[width=0.31\textwidth]{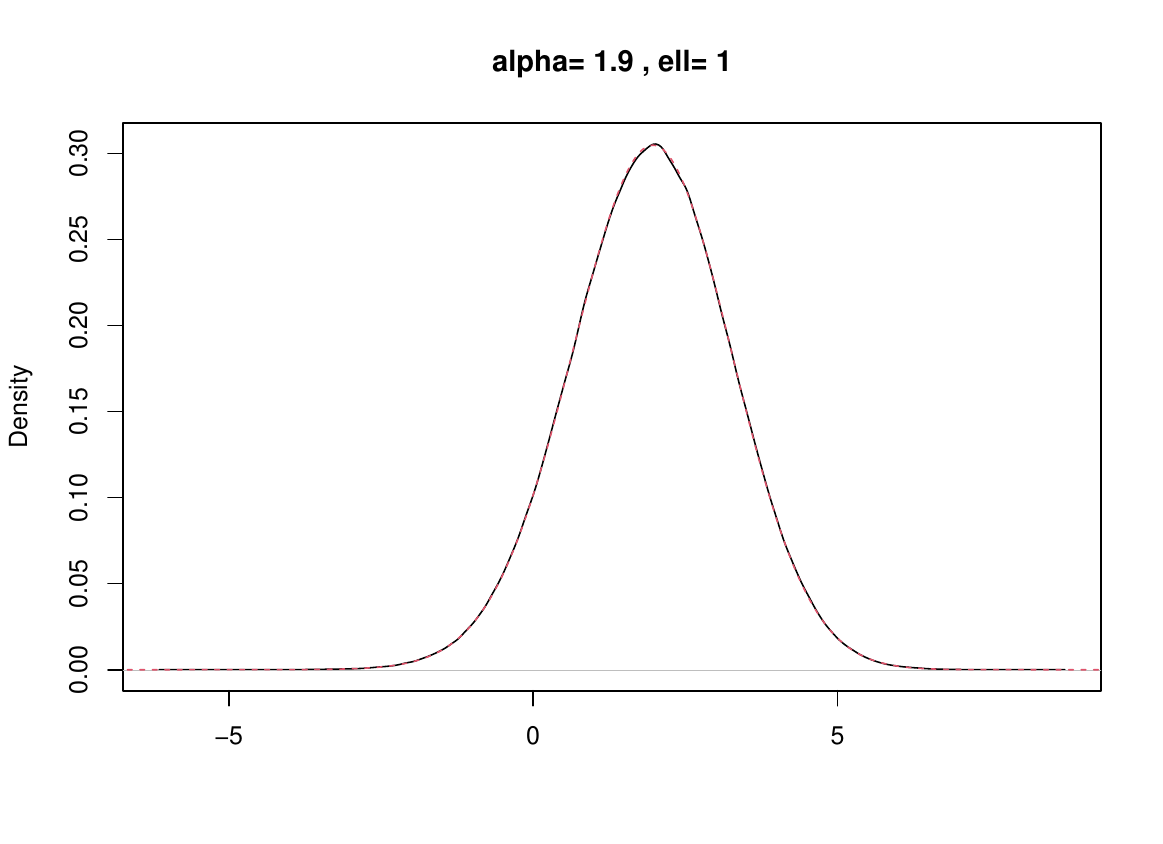} & \includegraphics[width=0.31\textwidth]{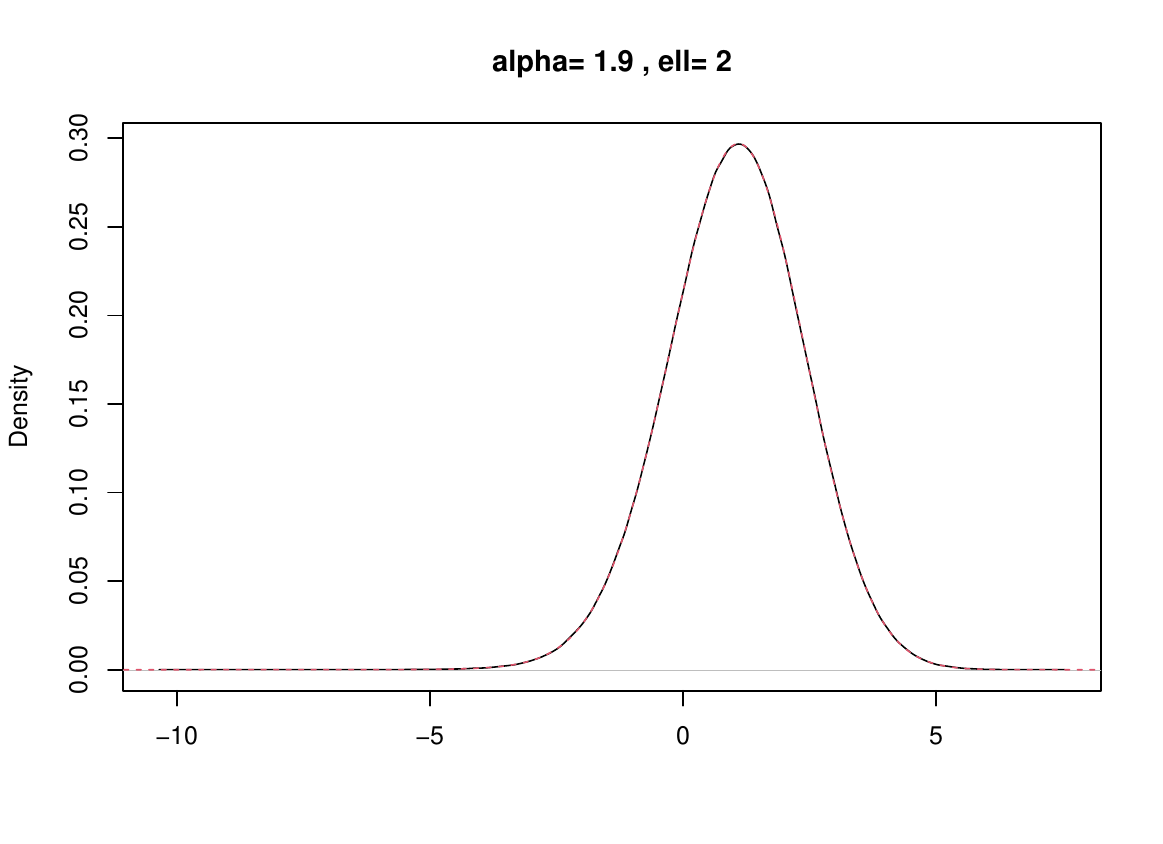} &  \includegraphics[width=0.31\textwidth]{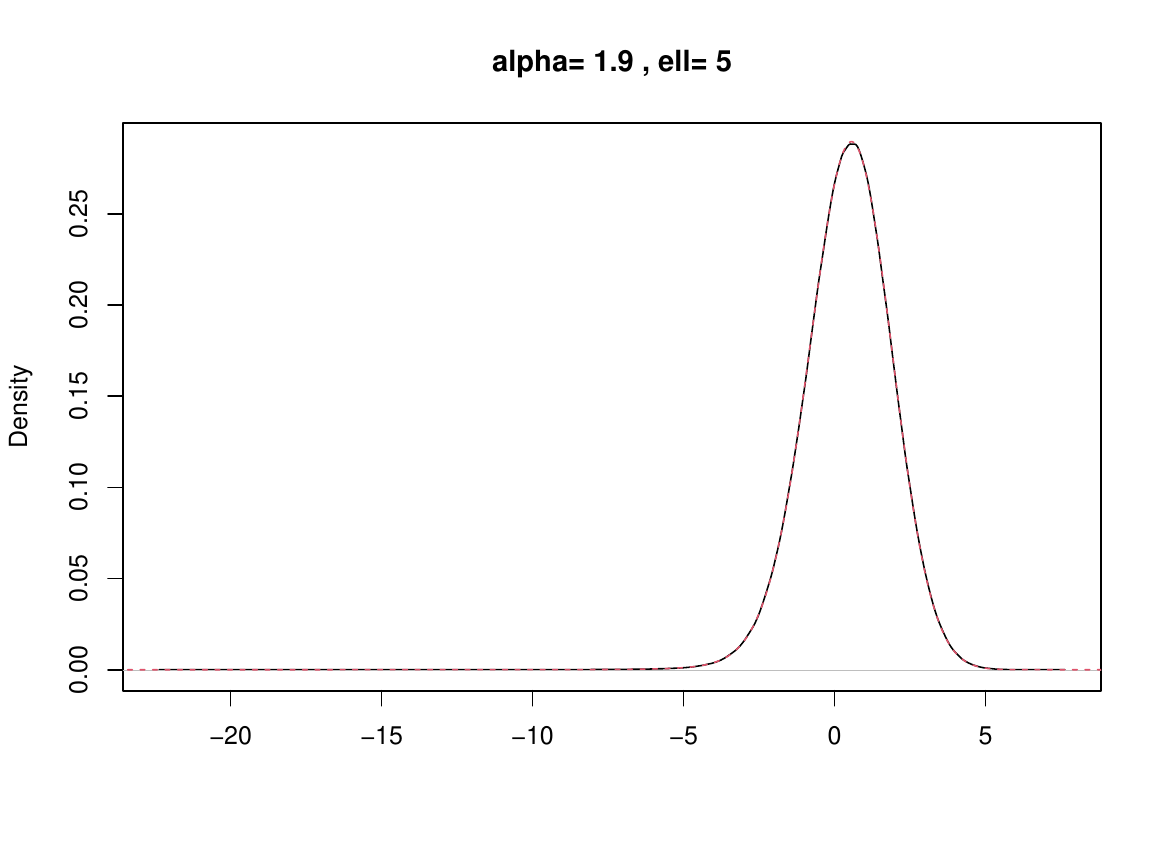} 
\end{tabular}
\caption{For several choices of the parameters, we used Algorithm \ref{alg: TS} to simulate a dataset from the corresponding TS distribution. For each simulated dataset, we plot the 
KDE with the theoretical density overlaid.  In all cases, we take $\sigma=1$.}\label{fig: sim results}
\end{figure}

\section{Refinement}\label{sec: ref}

It can be checked that $C_1$ approaches $\infty$ as $\ell\to0$, with the other parameters fixed. Thus, small values of $\ell$ can lead to large values of $K$. In this section, we introduce a methodology that can help in such situations. A similar methodology for the case $\alpha\in(0,1)$ is discussed in \cite{Hofert:2011} and \cite{Grabchak:2019}. To simplify the discussion, throughout this section we emphasize the dependence of  $K$ on $\ell$ by writing $K(\ell)$. Of course, $K$ also depends on other parameters.

Assume that we want to simulate from $\mathrm{TS}_-(\alpha,\ell,1)$ with $\ell$ small and $K(\ell)$ large and that we cannot improve the situation significantly by better calibrating the tuning parameters. We can consider the following approach. Select an integer $m\ge2$ and simulate $m$ observations $X_1,X_2,\dots,X_m\iid\mathrm{TS}_-\left(\alpha,m^{1/\alpha}\ell,1\right)$. Proposition \ref{prop: get sigma} implies that
\begin{eqnarray}\label{eq: ag}
Y = m^{-1/\alpha}\left(X_1+X_2+\cdots+X_m\right)\sim \mathrm{TS}_-(\alpha,\ell,1).
\end{eqnarray}
When $\alpha=1$, we also need to add $\frac{2}{\pi}\log m$. This gives an alternative approach to simulating from this distribution. On the one hand, this requires simulating $m$ random variables to get just one observation. On the other hand, $K\left(m^{1/\alpha}\ell\right)$ may be much smaller than $K(\ell)$, and the tradeoff may be worth it. Specifically, this is an improvement when $mK\left(m^{1/\alpha}\ell\right)\le K(\ell)$.

We now consider a concrete example, specifically simulation from $\mathrm{TS}_-(1.5,0.3,1)$. We take $p_1=p_2=1/2$ and $\epsilon=0.95$, which are reasonable choices. However, here $K(0.3)=206.8$, which is quite high. If we take $m=2$, we can improve things significantly. In this case, we simulate two observations from $\mathrm{TS}_-\left(1.5,0.3*2^{1/1.5},1\right)$, where $0.3*2^{1/1.5}=0.476$. Using the same tuning parameters, we get $K\left(0.3*2^{1/1.5}\right) = 20.33$ and $2K\left(0.3*2^{1/1.5}\right) = 40.67$, which is a huge improvement. 

We perform a small simulation to verify the performance of this approach. First, we consider direct simulation from $\mathrm{TS}_-(1.5,0.3,1)$. We simulate $N=10^7$ observations from the proposal distributions and, after applying rejection sampling, we obtained $n=48,551$ observations. Next, we consider the approach with $m=2$. We again simulated $N=10^7$ observations from the proposal distribution. After rejection sampling we obtained $491,480$ observations from $\mathrm{TS}_-\left(1.5,0.3*2^{1/1.5},1\right)$ and, after aggregation, we get $245,740$ observations from $\mathrm{TS}_-(1.5,0.3,1)$. In Figure \ref{fig: sim results ag}, we plot the KDE from both datasets with the true density overlaid. We can see that the fit is almost perfect.

\begin{figure}
\centering
\begin{tabular}{cc}
\includegraphics[width=0.31\textwidth]{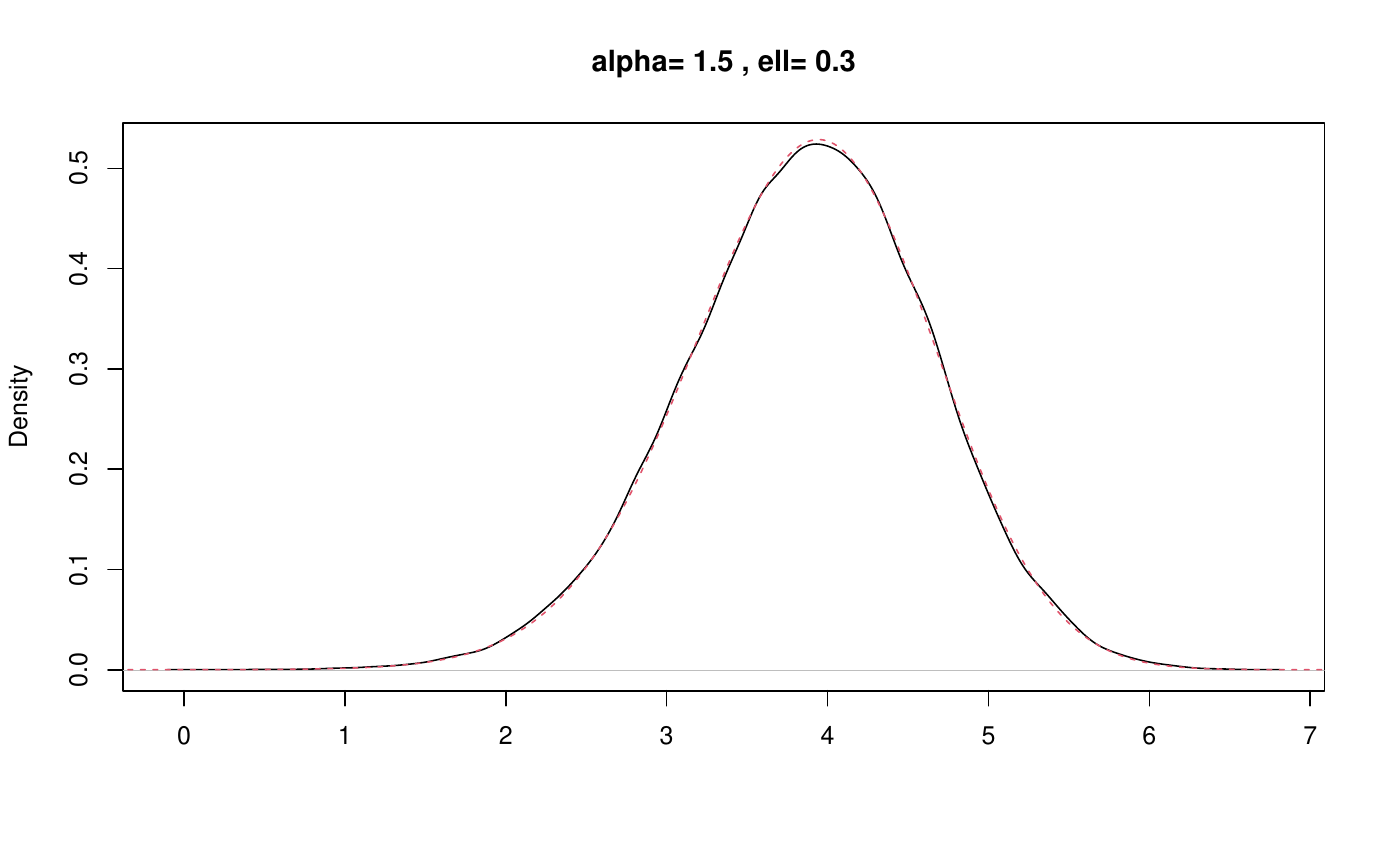} & \includegraphics[width=0.31\textwidth]{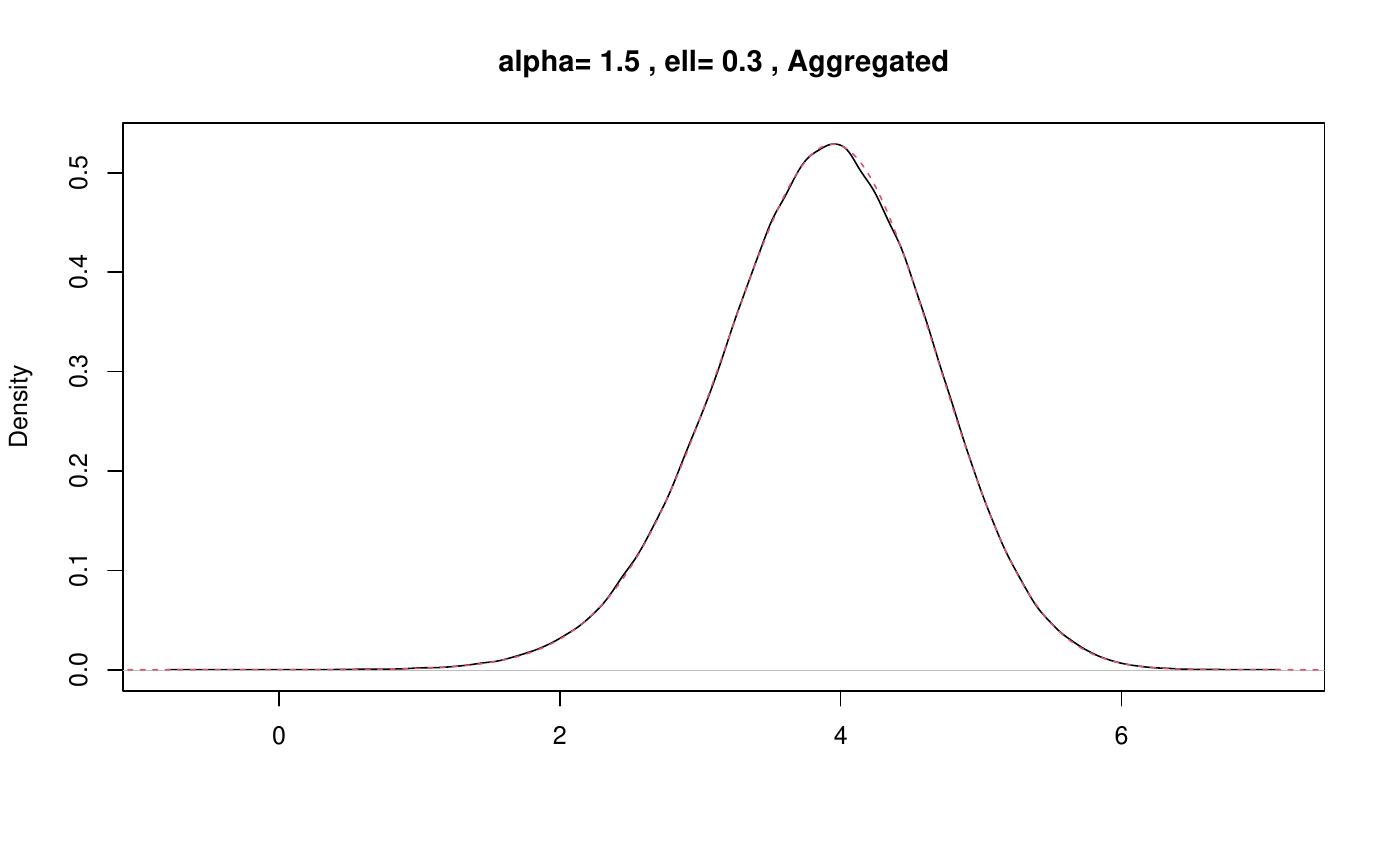} 
\end{tabular}
\caption{We simulated datasets from $\mathrm{TS}_-(1.5,0.3,1)$ using two approaches. The approach on the left uses Algorithm \ref{alg: TS} directly, while the approach on the right uses \eqref{eq: ag} with $m=2$. For each simulated dataset, we plot the KDE with the theoretical density overlaid.}\label{fig: sim results ag}
\end{figure}

\section{Extensions and Conclusions}\label{sec: ext}

In this paper we developed the first exact and computationally tractable method for simulating from fully left-skewed TS distributions in the infinite variation case, where $\alpha\in[0,2)$. Several useful classes of distributions arise as linear combinations of such random variables, and our method extends naturally to these cases. We discuss several such examples in this section and, then, give some directions for future work.

The first example is the class of fully {\em right}-skewed TS distributions, which we briefly mentioned in Remark \ref{remark: right skewed}. These have the same parameters as fully {\em left}-skewed TS distributions and are denote by $\mathrm{TS}_+(\alpha,\ell,\sigma)$. A random variable $X\sim \mathrm{TS}_+(\alpha,\ell,\sigma)$ if and only if $-X\sim \mathrm{TS}_-(\alpha,\ell,\sigma)$.

The next example is the class of classical tempered stable (CTS) distributions. A random variable $X$ is said to have a CTS distribution if
$$
X \eqd X_+-X_- + b,
$$
where $X_+\sim \mathrm{TS}_+(\alpha,\ell_+,\sigma_+)$ and $X_-\sim \mathrm{TS}_+(\alpha,\ell_-,\sigma_-)$ are independent and $b\in\mathbb R$. We denote this distribution $\mathrm{CTS}(\alpha,\ell_+,\sigma_+,\ell_-,\sigma_-,b)$. In the literature, CTS distributions are sometimes also called KoBoL, CGMY, smoothly truncated L\'evy flights (STFL), or just tempered stable distributions.

Multivariate CTS distributions are studied in, e.g., \cite{Xia:Grabchak:2022} and \cite{Xia:Grabchak:2024}. These distributions are characterized by the so-called spectral measure, which is a finite measure on the unit sphere in $d$ dimensions. When the support of the spectral measure is finite, with masses $s_1,s_2,\dots,s_k$, we can simulate from this distribution by taking
$$
X = s_1 X_1 + s_2 X_2+\cdots+s_k X_k,
$$
where $X_1,X_2,\dots,X_k$ are independent random variables with appropriate fully right-skewed TS distributions. Moreover, as shown in \cite{Xia:Grabchak:2022}, even when the support of the spectral measure is infinite, the above leads to a good approximation for an appropriate choice of the masses.

Finally, large and general classes of univariate and multivariate TS distributions are considered in \cite{Rosinski:2007} and \cite{Grabchak:2016}. Theorem 4.18 in \cite{Grabchak:2016} shows that all of these can be approximated by linear combinations of fully right-skewed TS distributions. 

We leave detailed investigations of the application of our methodology to simulation from the above classes of distributions for future work. Another important direction is the development of systematic approaches for calibrating the tuning parameters.

\end{document}